\def\PP{{\mathbb{P}}}
\def \a {\alpha}
\def \b {\beta}
\def \lam {\lambda}
\def \r {\gamma}
\def \F {{\mathbb F}}
\def \deg {{\rm deg}}
\def \Aut {{\rm Aut}}
\def \Gal {{\rm Gal}}
\def\bc{{\bf c}}
\def\Ga{{\alpha}}
\def \Gb{{\beta}}
\def\Tr{{\rm Tr}}
\newtheorem{theorem}{Theorem}[section]
\newtheorem{corollary}[theorem]{Corollary}
\newtheorem{proposition}[theorem]{Proposition}
\newtheorem{lemma}[theorem]{Lemma}
\newtheorem{example}{Example}
\newtheorem{remark}[theorem]{Remark}
\begin{document}

\title[Artin-Schreier curves]{An improvement of the Hasse-Weil bound for Artin-Schreier curves via cyclotomic function fields}
\thanks{}
\author{Liming Ma}\address{School of Mathematical Sciences, University of Science and Technology of China, Hefei China
230026}\email{lmma20@ustc.edu.cn}
\author{Chaoping Xing} \address{School of Electronic Information and Electric Engineering, Shanghai Jiao Tong University,
China 200240}\email{xingcp@sjtu.edu.cn}
\maketitle

\begin{abstract}
The corresponding Hasse-Weil bound was a major breakthrough in history of mathematics. It has found many applications in mathematics, coding theory and theoretical computer science. In general, the Hasse-Weil bound is tight and cannot be improved. However,  the Hasse-Weil bound is no longer tight when it is applied to some specific classes of curves. One of the examples where the Hasse-Weil bound is not tight is the family of Artin-Schreier curves. Due to various applications of Artin-Schreier curves to coding, cryptography and theoretical computer science, researchers have made great effort to improve the Hasse-Weil bound for Artin-Schreier curves.

In this paper, we focus on the number of rational places of the Artin-Schreier curve defined by $y^p-y=f(x)$ over the finite field $\F_q$ of characteristic $p$,  where $f(x)$ is a polynomial in $\F_q[x]$. Our road map for attacking this problem works as follows. We first show that the function field $E_f:=\F_q(x,y)$ of the Artin-Schreier curve $y^p-y=f(x)$ is a subfield of some cyclotomic function field. We then make use of the class field theory to prove that the number of points of the curve is upper bounded by a function of a minimum distance of  a linear code. By analyzing the minimum distance of this linear code, we can improve the Hasse-Weil bound and Serre bound for Artin-Schreier curves.
\end{abstract}

\section{Introduction}
Throughout this paper, let $p$ be a prime and let $q=p^m$ for some integer $m\ge 1$. Let $f(x)\in\F_q[x]$ be a polynomial and consider the set
\[Z_f:=\{\Ga\in\F_q:\; \Tr(f(\Ga))=0\},\]
where $\Tr$ is the trace function from $\F_q$ to $\F_p$. For simplicity, throughout this paper, we assume that the degree of $f(x)$ is $r$ with $r<p$.
Then the Weil bound shows that the cardinality $|Z_f|$ is bounded by the following inequality
\begin{equation}\label{eq:1.1}
\left||Z_f|-q^{m-1}\right|\le \frac{(r-1)(p-1)}{p}\sqrt{q}.
\end{equation}
A good upper bound on $|Z_f|$ has found various applications such as exponential sums \cite{Ca80, KL11, La91, MK93, WW16}, BCH codes in coding theory \cite{AL94, GV95,SV94,Wo88}
and nonlinearity of boolean functions \cite{AM14, Ca10}, etc. Let $N_f$ be the number of rational points of the Artin-Schreier curve:
\begin{equation}\label{eq:1.1a}
y^p-y=f(x).
\end{equation}
Based on the relation $N_f=1+p|Z_f|$,
the bound \eqref{eq:1.1} is derived from the following Hasse-Weil bound for Artin-Schreier curves
\begin{equation}\label{eq:1.2}
|N_f-1-q|\le (r-1)(p-1)\sqrt{q}.\end{equation}

In literatures, there are various improvements on the Hasse-Weil bound \eqref{eq:1.2} for Artin-Schreier curves.
Firstly Serre improved the Hasse-Weil bound for arbitrary curves of genus $g$ over $\F_q$ to the following bound
\begin{equation}\label{eq:1.3}
|N-q-1|\le g\lfloor 2\sqrt{q}\rfloor\end{equation}
by analyzing the property of algebraic integers of its L-polynomial \cite[Theorem 5.3.1]{St09}, where $N$ stands for the number of points of the curve. The bound \eqref{eq:1.3} is called the Serre bound. If we apply the Serre bound to the  Artin-Schreier curve
$y^p-y=f(x)$, we get the following bound
\begin{equation}\label{eq:1.4}
|N_f-q-1|\le \frac{(r-1)(p-1)}2\times \lfloor 2\sqrt{q}\rfloor.\end{equation}

In 1993, Moreno and Moreno provided an improvement to the Hasse-Weil bound for Artin-Schreier curves by the property of exact division \cite{MM93}.
Kaufman and Lovett proved that the Hasse-Weil bound can be improved for Artin-Schreier curves $y^p-y=f(x)$ with $f(x)=g(x)+h(x)$, where $g(x)$ is a polynomial with   $\deg (g(x))\ll \sqrt{q}$ and $h(x)$ is a sparse polynomial of arbitrary degree with bounded weight in \cite{KL11}.
Rojas-Leon and Wan showed that an extra $\sqrt{p}$ can be removed for this family of curves if $p$ is very large compared with polynomial degree of $f(x)$ and $\log_p q$ in \cite{RW11}.
Cramer and Xing showed that the property of exact division related to the number of rational places can be improved by using L-polynomials of algebraic curves with the Hasse-Witt invariant $0$ in \cite{CX17}.

In this paper, we will focus on the upper bound of the number of rational points of Artin-Schreier curves as well. However, our approach is completely different from those in all previous papers. More precisely speaking, our approach is divided into the following steps.

{\it Step 1:}  We first assume that an Artin Schreier curve has at least two rational points. In this case, there is a rational place corresponding to $x-\Ga$ of $\F_q(x)$ that splits completely in the function field  $E_f$ of an Artin Schreier curve. Thus, we can show that the function field $E_f$  of an Artin-Schreier curve is a subfield of the cyclotomic  function field $K_r=K(\Lambda_{T^{r+1}})$ with modulo $T^{r+1}$, where $K=\F_q(x)$ and $T=(x-\Ga)^{-1}$. As $K_r/K$ is a Galois extension with Galois group $\Gal(K_r/K)\simeq\F_q^*\times \F_p^{mr}$ and $\Gal(E_f/K)\simeq \F_p$, where $E_f=\F_q(x,y)$ with $y^p-y=f(x)$, $E_f$ is in fact a subfield of $F$, where $F$ is the subfield of $K_r$ fixed by $\F_q^*$. Thus, $\Gal(F/K)\simeq\F_p^{mr}$ can be viewed as an $\F_p$-vector space of dimension $mr$ and   the Galois group $\Gal(F/E_f)$ is a subspace of $\Gal(F/K)$ of dimension $mr-1$.

{\it Step 2:} To see whether $x=\Ga-\Gb$ has $p$ solutions in the equation $y^p-y=f(x)$ for $\Gb\in \F_q^*$, by the class field theory it is equivalent to checking if $x-(\Ga-\Gb)$ or equivalently $1+\Gb T$,  belongs to the group $\Gal(F/E_f)$. Let $\F_q^*=\{\Ga_1,\dots,\Ga_{q-1}\}$ and a subset $I\subseteq[q-1]$, the space spanned by $\{1+\Ga_i T\}_{i\in I}$ has $\F_p$-rank $rm$ if and only if the matrix $(\Ga_i^j)_{1\le j\le r, i\in I}$ has $\F_p$-rank $rm$. This gives an upper bound on $N_f$ that is a function of the minimum distance of the linear code over $\F_p$ generated by the matrix $(\Ga_i^j)_{1\le j\le r, 1\le i\le q-1}$, where $\Ga_i^j$ are viewed column vectors via an isomorphism between $\F_q$ and $\F_p^m$.

{\it Step 3:} By investigating the minimum distance of the above linear code, we finally obtain an improvement on the Hasse-Weil bound for Artin-Schreier curves.

This paper is organized as follows. In Section \ref{preliminary}, we introduce the basic results and theory on algebraic function fields and coding theory, such as Hilbert's ramification theory, conductors, Artin symbols, cyclotomic function fields, ray class fields and linear codes. In Section \ref{sec: 3}, we determine the conductors of Artin-Schreier curves and cyclotomic function fields, and show that the function field of an Artin-Schreier curve is a subfield of some cyclotomic function field. In Section \ref{sec: 4}, we show that the upper bound of the number of rational places of Artin-Schreier curves is a function of the minimum distance of some linear code. Furthermore, this upper bound is better than the Hasse-Weil bound for Artin-Schreier curves. In Section \ref{sec: 5}, we determine the minimum distance of the above linear code and provide a tight upper bound for Artin-Schreier curves with $\deg f(x)=2$, and we provide some examples for Artin-Schreier curves for $\deg f(x)\ge 3$ with the help of the software Magma.

\section{Preliminaries}\label{preliminary}
In this section, we introduce the basic results and theory on algebraic function fields and coding theory, such as Hilbert's ramification theory, conductors, Artin symbols, cyclotomic function fields, ray class fields and linear codes. For more details, please refer to \cite{AT67, Au00, MX19, NX01,St09}.

\subsection{Hilbert's ramification theory}
Let $\F_q$ be the finite field with $q$ elements. Let $K$ be the rational function field $\F_q(x)$, where $x$ is a transcendental element over $\F_q$.
There are exactly $q+1$ rational places of $K$, that is, the finite place $P_{\Ga}$ corresponding to $x-\Ga$ for all $\Ga\in \F_q$ and the infinity place $\infty$ corresponding to $1/x$.

Let $F/\F_q$ be a function field with genus $g(F)$ over the full constant field $\F_q$, which is called a global function field. Let $\PP_F$ denote the set of places of $F$. The place of $F$ with degree one is called rational.
Let $E/\F_q$ be a finite extension of function fields $F/\F_q$. The Hurwitz genus formula yields
$$2{g(E)}-2=[E:F]\cdot (2g(F)-2)+\deg \text{ Diff}(E/F),$$
where $\text{Diff}(E/F)$ stands for the different of the extension $E/F$  (see \cite[Theorem 3.4.13]{St09}).

For a place $P\in \PP_F$ and a place $Q\in \PP_E$ with $Q|P$,
we denote by $d(Q|P), e(Q|P)$ the different exponent and ramification index of $Q|P$, respectively.
Then the different of $E/F$ is given by
$$\text{Diff}(E/F)=\sum_{Q\in\PP_E} d(Q|P) Q.$$
If $Q|P$ is unramified or tamely ramified, then $d(Q|P)=e(Q|P)-1$ by Dedekind's Different Theorem \cite[Theorem 3.5.1]{St09}. However, if
 $Q|P$ is wildly ramified, that is, $e(Q|P)$ is divisible by $\text{char}(\mathbb{F}_{q})$, then it is more complicated to calculate the different exponent $d(Q|P)$. One way to find the different exponent $d(Q|P)$ is through high ramification groups and Hilbert's different formula.

The $i$-th ramification group $G_i(Q|P)$ of $Q|P$ for each $i\ge -1$ is defined by
$$G_i(Q|P)=\{ \sigma\in \Gal(E/F)| \nu_Q(\sigma(z)-z) \ge i+1 \text{ for all } z\in \mathcal{O}_Q\},$$
where $\mathcal{O}_Q$ stands for the integral ring of $Q$ in $E$ and $\nu_Q$ is the normalized discrete valuation of $E$ corresponding to the place $Q$.
If $Q|P$ is wildly ramified, then the different exponent $d(Q|P)$ is
$$d(Q|P)=\sum_{i=0}^{\infty} \Big{(} |G_i(Q|P)|-1\Big{)}$$
from Hilbert's Different Theorem \cite[Theorem 3.8.7]{St09}.

\subsection{Conductor}
Let $F/\F_q$ be a global function field and let $E/\F_q$ be a finite abelian extension of $F/\F_q$. Let $P\in \mathbb{P}_F$ be a unramified place in the extension $E/F$ and $Q$ be a place of $E$ lying over $P$. Let $a(Q|P)$ be the least non-negative integer $l$ such that the  ramification groups $G_i(Q|P)$ are trivial for all $i\ge l$.
The conductor exponent $c_P(E/F)$ of $P$ in $E/F$ is defined by
\[c_P(E/F):=\frac{d(Q|P)+a(Q|P)}{e(Q|P)}.\]
The conductor of $E/F$ is an effective divisor of $F$ given by
\[\text{Cond}(E/F):=\sum_{P\in \mathbb{P}_F} c_P(E/F) P.\]

The following two lemmas are useful to determine conductor exponents of places in abelian extensions (see \cite[Theorem 2.3.4]{NX01} and \cite[Lemma 2.3.7]{NX01}).
\begin{lemma}\label{lem: 2.1}
Let $E/F$ be a finite abelian extension of global function fields and let $P$ be a place of $F$. Then
\begin{itemize}
\item[(i)] $P$ is unramified in $E/F$ if and only if $c_P(E/F)=0$.
\item[(ii)] $P$ is tamely ramified in $E/F$ if and only if $c_P(E/F)=1$.
\item[(iii)] $P$ is widely ramified in $E/F$ if and only if $c_P(E/F)\ge 2$.
\end{itemize}
\end{lemma}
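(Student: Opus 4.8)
The plan is to reduce all three equivalences to Hilbert's Different Theorem together with one nontrivial input, the integrality of the conductor exponent of an abelian extension (the Hasse--Arf theorem). Throughout I abbreviate $G_i=G_i(Q|P)$, $e=e(Q|P)$, $d=d(Q|P)$, $a=a(Q|P)$, and I use that, since residue extensions over $\F_q$ are separable, $e=|G_0|$. First I would record a closed form for the conductor exponent. By Hilbert's Different Theorem $d=\sum_{i\ge 0}(|G_i|-1)$, and since $G_i=\{1\}$ for $i\ge a$ this truncates to give
\[
d+a=\sum_{i=0}^{a-1}\big(|G_i|-1\big)+a=\sum_{i=0}^{a-1}|G_i|,\qquad\text{hence}\qquad c_P(E/F)=\frac{1}{|G_0|}\sum_{i=0}^{a-1}|G_i|.
\]
All that remains is to evaluate this expression in each ramification regime.

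Next I would treat the unramified and tame cases, which are immediate from the closed form. If $P$ is unramified then $G_0=\{1\}$, so $a=0$, the sum is empty, and $c_P(E/F)=0$. If $P$ is tamely ramified then $|G_0|=e>1$ with $p\nmid e$; as $G_1$ is the (normal) Sylow $p$-subgroup of $G_0$ it is trivial, so $a=1$ and $c_P(E/F)=|G_0|/|G_0|=1$.

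The wild case is where the real work lies. If $P$ is wildly ramified then $p\mid e$, so $G_1$ is a nontrivial $p$-group and $a\ge 2$; the closed form rearranges to
\[
c_P(E/F)=1+\frac{1}{|G_0|}\sum_{i=1}^{a-1}|G_i|>1,
\]
the remaining sum being nonempty with $|G_1|\ge p$. This only delivers $c_P(E/F)>1$, and the hard part is to sharpen it to $c_P(E/F)\ge 2$. This is precisely where I expect to spend the abelian hypothesis: by the Hasse--Arf theorem the breaks in the upper-numbering ramification filtration of an abelian extension occur at integers, which is equivalent to $c_P(E/F)\in\Z$, and combined with $c_P(E/F)>1$ this forces $c_P(E/F)\ge 2$. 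Integrality is genuinely needed here, since for a non-abelian $P$ the quantity $(d+a)/e$ can lie strictly between $1$ and $2$: a totally ramified place with $|G_0|=p\ell$, $G_1$ of order $p$, and $G_2=\{1\}$ would yield $1+1/\ell$.

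Finally I would deduce the converses for free. The three alternatives---unramified, tamely ramified, wildly ramified---are mutually exclusive and exhaustive, while the computations above attach to them the disjoint value sets $\{0\}$, $\{1\}$, and $\{n\in\Z:n\ge 2\}$ for $c_P(E/F)$. Each implication therefore reverses, giving the biconditionals (i)--(iii). The only subtle ingredient in the whole argument is the Hasse--Arf integrality used in the wild case; everything else is a direct unwinding of Hilbert's different formula.
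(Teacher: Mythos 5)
The paper does not actually prove this lemma: it is quoted as a known preliminary from [NX01, Theorem 2.3.4], so there is no internal argument to compare against, and your proposal must be judged on its own merits. Judged so, it is correct. The closed form $c_P(E/F)=\frac{1}{|G_0|}\sum_{i=0}^{a-1}|G_i|$ follows exactly as you say from Hilbert's different formula (which holds for every Galois extension with perfect residue fields, not only in the wild case where the paper happens to state it) together with $e=|G_0|$, and the unramified and tame cases are then immediate from the standard fact that $G_1$ is the unique Sylow $p$-subgroup of $G_0$. More importantly, you have correctly isolated the one genuinely non-elementary point: in the wild case the closed form only yields $c_P(E/F)>1$, and sharpening this to $c_P(E/F)\ge 2$ is precisely where the abelian hypothesis must be spent. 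Your appeal to Hasse--Arf is the right mechanism: $c_P(E/F)-1=\frac{1}{|G_0|}\sum_{i=1}^{a-1}|G_i|$ is the value of the Herbrand function at the largest lower-numbering jump $a-1$, i.e.\ the largest upper-numbering jump, hence an integer when $\Gal(E/F)$ is abelian; and your hypothetical filtration $|G_0|=p\ell$, $|G_1|=p$, $G_2=\{1\}$ (which is realized by suitable non-abelian extensions) correctly shows that no elementary counting argument can replace this input. Two minor quibbles, neither affecting correctness: Hasse--Arf (integrality of \emph{all} upper jumps) implies, but is not literally equivalent to, integrality of the single quantity $c_P(E/F)$; and since $c_P(E/F)$ is defined via a choice of $Q$ over $P$, you should remark that $d,a,e$ are independent of that choice because the extension is Galois. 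Your route (Hilbert's formula plus Hasse--Arf) is one of the standard ones; the source the paper cites instead obtains the statement inside local class field theory via norm groups, which is of comparable depth, since Hasse--Arf is itself part of that theory.
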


\begin{lemma}\label{lem: 2.3}
Let $K\subseteq F\subseteq E$ be three global function fields with $E/K$ being a finite abelian extension. Let $P$ be a place of $K$ and $Q$ be a place of $F$ lying over $P$. Then we have
$$c_P(F/K)\le c_P(E/K)\le \max\{c_P(F/K),c_Q(E/F)\}.$$
\end{lemma}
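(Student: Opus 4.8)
The plan is to recast each conductor exponent in terms of the ramification filtration in the \emph{upper} numbering, where behaviour under subextensions is governed by Herbrand's theorem. Fix a place $R$ of $E$ lying over $Q$ (hence over $P$), write $G=\Gal(E/K)$ and $H=\Gal(E/F)$, and let $G_i=G_i(R|P)$ and $G^u$ denote the ramification groups of $E/K$ in the lower and upper numbering. The first step is to prove the clean formula
\[
c_P(E/K)=\lambda_{E/K}(P)+1,\qquad \lambda_{E/K}(P):=\max\{u\ge 0:\ G^u\ne 1\},
\]
with the convention $\lambda=-1$ (hence $c_P=0$) when $P$ is unramified, and the analogous quantities $\lambda_{F/K}(P)$, $\lambda_{E/F}(Q)$ for the other two extensions. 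This follows from the definition $c_P(E/K)=(d(R|P)+a(R|P))/e(R|P)$ together with Hilbert's different formula already recorded in the paper: writing $a$ for the least integer with $G_a=1$ and $e=|G_0|$, a direct computation gives $(d+a)/e=\varphi_{E/K}(a-1)+1$, where $\varphi_{E/K}$ is the Herbrand function, and $\varphi_{E/K}(a-1)$ is exactly the largest jump of $G^\bullet$ since $G_{a-1}\ne 1=G_a$. Because $E/K$ is abelian, the Hasse--Arf theorem guarantees $\lambda_{E/K}(P)$ is an integer, so the formula returns an integer conductor exponent.

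For the left-hand inequality I would use that $F$ is the subfield of $E$ fixed by $H$, so $\Gal(F/K)\cong G/H$, and Herbrand's theorem gives $(G/H)^u=G^uH/H$. Hence $(G/H)^u\ne 1$ if and only if $G^u\not\subseteq H$, which in particular forces $G^u\ne 1$. Therefore $\lambda_{F/K}(P)\le\lambda_{E/K}(P)$, and adding $1$ yields $c_P(F/K)\le c_P(E/K)$.

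The right-hand inequality is the substantive part. Set $u_0=\lambda_{E/K}(P)$ and suppose, for contradiction, that $u_0>\lambda_{F/K}(P)$ and $u_0>\lambda_{E/F}(Q)$. From $u_0>\lambda_{F/K}(P)$ and the description of $(G/H)^{u_0}$ above we get $G^{u_0}\subseteq H$, while $G^{u_0}\ne 1$ by definition of $u_0$. Writing $w=\psi_{E/K}(u_0)$ for the lower-numbering index of this break (so $G^{u_0}=G_w$ and $\varphi_{E/K}(w)=u_0$), the compatibility of the lower numbering with subgroups, $H\cap G_w=H_w(R|Q)$, gives $H_w=G_w\ne 1$; passing to the upper numbering of $E/F$ shows $H^{\varphi_{E/F}(w)}\ne 1$, so $\lambda_{E/F}(Q)\ge\varphi_{E/F}(w)$. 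Now the transitivity of the Herbrand function, $\varphi_{E/K}=\varphi_{F/K}\circ\varphi_{E/F}$, together with $\varphi_{F/K}(t)\le t$, gives $u_0=\varphi_{E/K}(w)=\varphi_{F/K}(\varphi_{E/F}(w))\le\varphi_{E/F}(w)\le\lambda_{E/F}(Q)$, contradicting $u_0>\lambda_{E/F}(Q)$. Hence $u_0\le\max\{\lambda_{F/K}(P),\lambda_{E/F}(Q)\}$, and adding $1$ gives $c_P(E/K)\le\max\{c_P(F/K),c_Q(E/F)\}$.

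The main obstacle I anticipate is bookkeeping rather than conceptual: correctly marshalling the two standard functorialities of the ramification filtration---lower numbering restricts to subgroups ($H_i=H\cap G_i$) while upper numbering passes to quotients (Herbrand)---and linking them through the transitivity $\varphi_{E/K}=\varphi_{F/K}\circ\varphi_{E/F}$, all while treating the unramified and tame boundary cases uniformly via the convention $\lambda=-1$. Establishing the identity $(d+a)/e=\varphi(a-1)+1$ in full generality, rather than on an instructive wild example such as $G_0=\dots=G_{s}\ne G_{s+1}=1$, is the one computation that needs genuine care.
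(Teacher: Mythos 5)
Your proof is correct, but there is no proof in the paper to compare it against: the paper states Lemma \ref{lem: 2.3} as a quoted result, citing \cite[Lemma 2.3.7]{NX01}, so your argument serves as a self-contained substitute rather than an alternative to an internal proof. The substance checks out. The pivot identity $c_P(E/K)=(d+a)/e=\varphi_{E/K}(a-1)+1$ is right: since $G_i=1$ for $i\ge a$, Hilbert's different formula gives $d+a=\sum_{i=0}^{a-1}|G_i|$, and dividing by $e=|G_0|$ yields $1+\sum_{i=1}^{a-1}|G_i|/|G_0|=1+\varphi_{E/K}(a-1)$, which is indeed the largest upper-numbering jump plus one; your conventions recover $c_P=0$ in the unramified case ($a=0$) and $c_P=1$ in the tame case ($a=1$), consistent with Lemma \ref{lem: 2.1}. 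The left inequality correctly combines Herbrand's theorem with the observation that $G^uH/H\ne 1$ forces $G^u\ne 1$, and the right inequality's contradiction chain ($G^{u_0}\subseteq H$, hence $H_w=H\cap G_w=G_w\ne 1$, hence $u_0=\varphi_{F/K}(\varphi_{E/F}(w))\le\varphi_{E/F}(w)\le\lambda_{E/F}(Q)$) is sound. All the facts you import---Herbrand's theorem, $H_i=H\cap G_i$, the transitivity $\varphi_{E/K}=\varphi_{F/K}\circ\varphi_{E/F}$, and $\varphi_{F/K}(t)\le t$---are legitimate precisely because $G$ is abelian, so $H$ is normal and $F/K$ is Galois; this hypothesis deserves an explicit mention, since for a non-normal intermediate field the quotient filtration and the transitivity of $\varphi$ do not even parse. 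Two minor points: the appeal to Hasse--Arf is decorative (your inequalities are inequalities of real numbers and integrality is never used), and you should dispose of the case where $P$ is unramified in $E/K$ up front (both inequalities are then trivial), so that $w=\psi_{E/K}(u_0)$ is only formed when $u_0\ge 0$. For comparison, the standard alternative derivation runs through local class field theory: conductor exponents are read off norm groups, and the tower inequality follows from $N_{E/K}=N_{F/K}\circ N_{E/F}$ together with the compatibility of norm maps with unit filtrations; your ramification-theoretic route is the mirror image of that argument under the conductor--norm-group dictionary, and it matches the paper's ramification-group definition of $c_P$ more directly.
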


\subsection{Artin symbol}
Let $E/\F_q$ be a finite Galois extension of $F/\F_q$. Let $P\in \mathbb{P}_F$ be a unramified place in $E/F$ and $Q$ be a place of $E$ lying over $P$. Let $Z$ be the decomposition field of $Q$ over $P$. There exists a unique automorphism $\sigma\in \Gal(E/Z)$ such that $$\sigma(z)\equiv z^{q^{\deg(P)}}(\text{mod } Q) \text{ for all } z\in \mathcal{O}_Q. $$
This unique automorphism $\sigma$ is called the Frobenius symbol of $Q$ over $P$ and denoted by $\left[\frac{E/F}{Q}\right]$.
Furthermore, if $E/F$ is abelian, then the Frobenius symbol $\left[\frac{E/F}{Q}\right]$ does not depend on the choice of $Q$, but only on the place of $P$.
Hence, the Frobenius symbol $\left[\frac{E/F}{Q}\right]$ can be written as  $\left[\frac{E/F}{P}\right]$ and called the Artin symbol of $P$ in $E/F$.

The following result can be found in \cite[Proposition 1.4.12]{NX01}. It characterizes whether a place splits completely in an abelian extension in terms of Artin symbols.
\begin{lemma}\label{lem: 2.0}
Let $E/K$ be a finite abelian extension and let $F$ be a subfield of $E/K$. Suppose that a place $P\in \mathbb{P}_K$ is unramified in the extension $E/K$. Then $P$ splits completely in $F/K$ if and only if the Artin symbol $\left[\frac{E/K}{P}\right]$ belongs to $\Gal(E/F)$.
\end{lemma}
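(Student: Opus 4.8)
The plan is to translate the splitting condition into a statement about the decomposition group and then identify that group as the cyclic group generated by the Artin symbol. First I would fix a place $Q\in\PP_E$ lying over $P$, let $Q'$ be the place of $F$ lying below $Q$, and write $G=\Gal(E/K)$ and $H=\Gal(E/F)$, so that $F=E^H$ under the Galois correspondence. Let $D=D(Q|P)\subseteq G$ denote the decomposition group. Because $E/K$ is abelian, the decomposition groups of the various places over $P$ coincide, so $D$ is independent of the choice of $Q$; and because $P$ is unramified in $E/K$, the inertia group is trivial, so $D$ is cyclic and is generated precisely by the Frobenius automorphism, that is $D=\langle\sigma\rangle$ with $\sigma=\left[\frac{E/K}{P}\right]$ the Artin symbol, exactly as recalled in the definition preceding the statement.

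Next I would compute $f(Q'|P)$ in terms of $D$ and $H$. Since $D(Q|Q')=\{\tau\in H:\tau Q=Q\}=D\cap H$ and every extension in the tower $K\subseteq F\subseteq E$ is unramified at the relevant places, the residue degrees equal the orders of the decomposition groups, namely $f(Q|P)=|D|$ and $f(Q|Q')=|D\cap H|$. Multiplicativity of residue degrees along $Q\,|\,Q'\,|\,P$ then yields
\[
f(Q'|P)=\frac{f(Q|P)}{f(Q|Q')}=\frac{|D|}{|D\cap H|}.
\]
Because $E/K$ is abelian, $F/K$ is itself Galois, so the integers $e(Q'|P)$ and $f(Q'|P)$ are constant across all places $Q'$ of $F$ above $P$, and hence this single quantity governs the splitting of $P$ in $F/K$.

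Then I would read off the equivalence. As $P$ is unramified in $F/K$ (being already unramified in $E/K$), the place $P$ splits completely in $F/K$ exactly when $f(Q'|P)=1$ for one, hence every, place $Q'$ above $P$. By the displayed formula this holds if and only if $|D|=|D\cap H|$, that is, if and only if $D\subseteq H$. Finally, since $D=\langle\sigma\rangle$ is generated by the Artin symbol and $H$ is a subgroup, $D\subseteq H$ is equivalent to $\sigma\in H=\Gal(E/F)$, which is the desired conclusion; one may equivalently phrase this as $F$ being contained in the decomposition field $E^{D}$.

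I expect the only delicate point to be the bookkeeping that passes from the condition ``$P$ splits completely in $F/K$'', which a priori concerns all places of $F$ above $P$, to the single numerical condition $f(Q'|P)=1$. This rests on the constancy of $e$ and $f$ over the places above $P$ in the Galois extension $E/K$, together with the identification of $D$ with $\langle\sigma\rangle$ in the unramified case. Once these standard facts are secured the remainder is purely group-theoretic through the Galois correspondence, so no genuine analytic or geometric obstruction arises.
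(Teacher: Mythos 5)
Your proposal is correct and complete. Note that the paper does not actually prove this lemma: it is imported as a known result from \cite[Proposition 1.4.12]{NX01}, so there is no internal argument to compare against; what you have written is the standard proof such a reference supplies. All the key steps check out: the abelian hypothesis makes the decomposition group $D$ independent of the chosen place $Q$ above $P$ and makes $F/K$ Galois (so $e$ and $f$ are constant above $P$ and complete splitting reduces to $f(Q'|P)=1$); unramifiedness kills the inertia group, giving $D=\langle\sigma\rangle$ with $\sigma=\left[\frac{E/K}{P}\right]$ together with $f(Q|P)=|D|$ and $f(Q|Q')=|D\cap H|$; and multiplicativity of residue degrees in the tower $K\subseteq F\subseteq E$ then converts complete splitting into $D\subseteq H=\Gal(E/F)$, which for a cyclic $D$ is exactly $\sigma\in\Gal(E/F)$.
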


\subsection{Cyclotomic function fields}

In this subsection, we briefly review some of the fundamental notions and results of cyclotomic function fields. The theory of cyclotomic function fields was developed in the language of function fields by Hayes (see \cite{Ha74, NX01}).

Let $q$ be a prime power. Let $x$ be an indeterminate over $\F_q$, $R=\F_q[x]$ the polynomial ring, $K=\F_q(x)$ the quotient field of $R$, and $K^{ac}$ the algebraic closure of $K$. Let $\varphi$ be the endomorphism given by $$\varphi(z)=z^q+xz $$  for all $z\in K^{ac}$. Define a ring homomorphism
$$R\rightarrow \text{End}_{\mathbb{F}_q}(K^{ac}), f(x)\mapsto f(\varphi).$$
Then the $\F_q$-vector space of $K^{ac}$ is made into an $R$-module by introducing the following action of $R$ on $K^{ac}$, namely,
$$ z^{f(x)}=f(\varphi)(z)$$  for all $f(x)\in R$ and $z\in K^{ac}$. For a nonzero polynomial $M\in R$, we consider the set of $M$-torsion points of $K^{ac}$ defined by $$\Lambda_M=\{z\in K^{ac}| z^M=0\}.$$
In fact, $z^M$ is a separable polynomial of degree $q^d,$ where $d=\deg(M)$. The cyclotomic function field over $K$ with modulus $M$ is defined by the subfield of $K^{ac}$ generated over $K$ by all elements of $\Lambda_M$, and it is denoted by $K(\Lambda_M)$. In particular, we list the following facts:

\begin{proposition}
\label{genusofcyclotomic}
Let $P$ be a monic irreducible polynomial of degree $d$ in $R$ and let $n$ be a positive integer. Then
\begin{itemize}
\item[\rm (i)] $[K(\Lambda_{P^n}):K]=\phi(P^n)$, where $\phi(P^n)$ is the Euler function of $P^n$, i.e., $\phi(P^n)=q^{(n-1)d}(q^d-1)$.
\item[\rm (ii)] ${\rm Gal}(K(\Lambda_{P^n})/K) \cong (\mathbb{F}_q[x]/(P^{n}))^*.$ The Galois automorphism $\sigma_f$ associated to $\overline{f}\in (\mathbb{F}_q[x]/(P^{n}))^*$ is determined by $\sigma_f(\lambda)=\lambda^f$ for $\lambda\in \Lambda_{P^{n}}$.
\item[\rm (iii)]The zero place of $P$ in $K,$ also denoted by $P$, is totally ramified in  $K(\Lambda_{P^n})$ with different exponent $d_P(K(\Lambda_{P^n})/K)=n(q^d-1)q^{d(n-1)}-q^{d(n-1)}$. All other finite places of $k$ are unramified in $K(\Lambda_{P^n})/K$.
\item[\rm (iv)]The infinite place $\infty$ of $K$ splits into $\phi(P^n)/(q-1)$ places of $K(\Lambda_{P^n})$  and the ramification index $e_{\infty}(K(\Lambda_{P^n})/K)$ is equal to $q-1$. In particular, $\mathbb{F}_q$ is the full constant field of  $K(\Lambda_{P^n})$.
\item[\rm (v)] The genus of $K(\Lambda_{P^n})$ is given by $$2g(K(\Lambda_{P^n}))-2=q^{d(n-1)}\Big{[}(qdn-dn-q)\frac{q^d-1}{q-1}-d\Big{]}.$$
\end{itemize}
\end{proposition}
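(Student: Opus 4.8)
The plan is to treat (i)–(ii) together via the $R$-module structure of the torsion points, then extract the ramification data at $P$ and at $\infty$, and finally feed everything into the Hurwitz genus formula to obtain (v).

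First I would establish the fundamental isomorphism $\Lambda_{P^n}\cong R/(P^n)$ of $R$-modules. Since $\varphi$ is $\F_q$-linear and $z^{P^n}$ is a separable additive polynomial of degree $q^{nd}$ (its linear coefficient being $P^n\ne 0$), the torsion module $\Lambda_{P^n}$ has exactly $q^{nd}$ elements and is annihilated by $P^n$; an $R$-module with annihilator $(P^n)$ and cardinality $|R/(P^n)|$ is cyclic, hence isomorphic to $R/(P^n)$. Fixing a generator $\lambda$ gives $K(\Lambda_{P^n})=K(\lambda)$. Any $\sigma\in\Gal(K(\Lambda_{P^n})/K)$ fixes $x$, so it commutes with the $R$-action and permutes $\Lambda_{P^n}$ as an $R$-module automorphism; such automorphisms of $R/(P^n)$ are exactly multiplication by units, yielding an injection $\Gal(K(\Lambda_{P^n})/K)\hookrightarrow(R/(P^n))^*$. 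A direct count gives $|(R/(P^n))^*|=q^{nd}-q^{(n-1)d}=\phi(P^n)$, so $[K(\Lambda_{P^n}):K]\le\phi(P^n)$.

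To obtain the reverse inequality and prove (iii), I would show $P$ is totally ramified. The generator $\lambda$ is a root of $\Phi_{P^n}(X)=X^{P^n}/X^{P^{n-1}}$, which one checks is Eisenstein at $P$ (its constant term equals $P$, exactly divisible by $P$); hence $P$ is totally ramified of degree $\phi(P^n)$, forcing $[K(\Lambda_{P^n}):K]\ge\phi(P^n)$. Combined with the previous bound this proves (i), upgrades the injection to the isomorphism of (ii), and shows $\lambda$ is a uniformizer at the unique place $\mathfrak P$ above $P$; separability of $z^{P^n}$ together with good reduction away from $P$ shows all other finite places are unramified. The main technical step is then the different exponent at $P$. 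For a unit $f$ with $f\equiv 1\pmod{P^j}$ but $f\not\equiv1\pmod{P^{j+1}}$, writing $f=1+hP^j$ with $P\nmid h$ and using additivity of the Carlitz action gives $\sigma_f(\lambda)-\lambda=(\lambda^{P^j})^{h}$; since $\lambda^{P^j}$ generates $\Lambda_{P^{n-j}}$ and is thus a uniformizer of $K(\Lambda_{P^{n-j}})$ above $P$, one gets $\nu_{\mathfrak P}(\sigma_f(\lambda)-\lambda)=\phi(P^n)/\phi(P^{n-j})=q^{jd}$. This pins down the lower-numbering ramification groups: $G_0=(R/(P^n))^*$, while $G_i=1+P^j(R/(P^n))$ of order $q^{(n-j)d}$ precisely for $q^{(j-1)d}\le i\le q^{jd}-1$. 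Hilbert's different formula $d(\mathfrak P\mid P)=\sum_{i\ge0}(|G_i|-1)$ then reduces to a finite geometric-type sum evaluating to $n(q^d-1)q^{(n-1)d}-q^{(n-1)d}$. I expect this ramification-group bookkeeping to be the most delicate part.

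For (iv) I would use that the decomposition group of $\infty$ inside $(R/(P^n))^*$ is the image of the scalars $\F_q^*$, so $\infty$ is tamely and totally ramified in the corresponding decomposition field with $e_\infty=q-1$ and trivial residue extension; consequently $\infty$ splits into $\phi(P^n)/(q-1)$ rational places, which in particular forces $\F_q$ to be the full constant field. Finally, for (v) I would apply the Hurwitz genus formula to $K(\Lambda_{P^n})/K$ with $g(K)=0$: the different is supported only on $\mathfrak P$, of degree $d$ and contributing $d\cdot d_P(K(\Lambda_{P^n})/K)$, and on the $\phi(P^n)/(q-1)$ tame places above $\infty$, each of degree $1$ and contributing $e_\infty-1=q-2$. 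Substituting these contributions into $2g(K(\Lambda_{P^n}))-2=-2\phi(P^n)+\deg\mathrm{Diff}$ and simplifying yields the stated formula; here the only remaining work is routine algebraic simplification.
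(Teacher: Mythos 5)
The paper never proves this proposition: it is stated in the preliminaries as a list of known facts about cyclotomic function fields, with the proofs delegated to Hayes \cite{Ha74} and Niederreiter--Xing \cite{NX01}. So there is no internal argument to compare against; what you have written is essentially a reconstruction of the standard development in those references (cyclicity of $\Lambda_{P^n}$ as an $R$-module, Eisenstein property of $\Phi_{P^n}$ at $P$, the ramification filtration via $\nu_{\mathfrak P}(\sigma_f(\lambda)-\lambda)=q^{jd}$, Hilbert's different formula, then Hurwitz). Your bookkeeping is correct: the groups $G_i=1+P^j\bigl(R/(P^n)\bigr)$ for $q^{(j-1)d}\le i\le q^{jd}-1$ do give $\sum_{i\ge 0}(|G_i|-1)=n(q^d-1)q^{(n-1)d}-q^{(n-1)d}$, and feeding the degree-$d$ place over $P$ plus the $\phi(P^n)/(q-1)$ tame rational places over $\infty$ into the Hurwitz formula does simplify to the stated expression for $2g-2$. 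Two small glosses are worth a line each: Eisenstein-ness of $\Phi_{P^n}$ requires all non-leading coefficients to be divisible by $P$ (which follows from $z^{P}\equiv z^{q^d}\pmod P$), not only the constant term; and cyclicity of $\Lambda_{P^n}$ needs the observation that $z^{P^{n-1}}$, having only $q^{(n-1)d}$ roots, cannot annihilate all $q^{nd}$ elements, so the annihilator is exactly $(P^n)$.

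The one genuine gap is part (iv). You propose to ``use that the decomposition group of $\infty$ inside $(R/(P^n))^*$ is the image of the scalars $\mathbb{F}_q^*$,'' but that claim \emph{is} the content of (iv): once it is granted, the ramification index $q-1$, the splitting number $\phi(P^n)/(q-1)$, and the full constant field all follow formally, exactly as you say. Establishing it requires an actual computation at the infinite place --- for instance the Newton polygon, over the completion $\mathbb{F}_q((1/x))$, of the polynomial whose roots are $\Lambda_P\setminus\{0\}$, showing that a generator of $\Lambda_P$ generates a totally and tamely ramified extension of $K_\infty$ of degree $q-1$, followed by an induction on $n$; this is where Hayes and Rosen spend their effort. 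As written, your treatment of (iv) is circular, while every other part of the proposal is a sound outline of the classical proof.
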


\subsection{Ray class fields}
Let $F/\F_q$ be a global function field. Let $P$ be a place of $F$ and let $F_P$ be the completion of $F$ at $P$.
We still use $P$ to stand for the place of $F_P$ lying over $P$. We use $\mathcal{O}_{F_P}$ and $U_{F_P}$ for the valuation ring of $F_P$ and the group of units of $\mathcal{O}_{F_P}$, respectively.
For a place of $P$ of $F$, we consider the $n$-th unit group of $P$ in $F_P$ defined by \[U_{F_P}^{(n)}=\{x\in \mathcal{O}_{F_P}: \nu_P(x-1)\ge n\}\]
and denote by $U_{F_P}^{(0)}$ the unit group of $P$ in $F_P$.

Let $J_F$ be the set of ideles of $F$ and let $C_F$ be the idele class group $J_F/F^*$ of $F$.
Let $S$ be a non-empty finite subset of $\mathbb{P}_F$ and let $D$ be an effective divisor of $F$.
Define the $S$-congruence subgroup mod $D$ as $J_S^D=\prod_{P\in S}F_P^* \times \prod_{P\notin S} U_{F_P}^{(m_P)}$. Its class group is defined by
$$C_S^D=(F^*\cdot J_S^D)/F^*.$$
Let $\mathcal{O}_S$ be the holomorphy ring of all functions in $F$ with poles in $S$.
From the weak approximation \cite[Theorem 1.3.1]{St09} or \cite[Proposition 2.4.3]{NX01}, the $S$-ray class group mod $D$ is isomorphic to $S$-ideal class group mod $D$, i.e.,
$$ C_F/C_S^D\cong J_F/(F^*\cdot J_S^D)\cong \text{Cl}_D(\mathcal{O}_S).$$
Note that $\text{Cl}_D(\mathcal{O}_S)$ is defined as the quotient of the group of fractional ideals of $\mathcal{O}_S$ prime to $D$ by its subgroup of principal ideals.
It follows that $C_S^D$ is a subgroup of $C_F$ with finite index. Then there exists a unique extension $F_S^D/F$ such that $(F^*\cdot N_{E/F}(J_E))/F^*=C_S^D$ from the existence theorem \cite[Theorem 2.5.1]{NX01}. The field $F_S^D$ is called the $S$-ray class field with modulus $D$.

In particular, if $S$ consists of only one place, an explicit construction of $F_S^D$ via rank one Drinfeld modules has been given by Hayes \cite{Ha79}.
Moreover, the degree of the extension $F_{S}^D/F$ can be found from \cite{Au00}.
\begin{lemma}\label{prop: 2.3}
Let $S$ be a set consisting of a unique place of $F/\F_q$ with degree $t>0$. Let $D=\sum_{j=1}^{s}c_jQ_j$ be a positive divisor of $E$ with $\mbox{supp}(D)\cap S=\emptyset$. Let $h_F$  be the class number of $F$. Then we have
$$[F_S^D:F]=\frac1{q-1}\cdot h_F\cdot t \cdot \phi(D),$$
where $\phi(D)$ is the Euler function of $D$, i.e.,  $\phi(D)=\prod_{j=1}^s(q^{\deg (Q_j)}-1)q^{(c_j-1)\deg (Q_j)}$.
\end{lemma}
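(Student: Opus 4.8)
The plan is to reduce the computation of $[F_S^D:F]$ to the order of a ray class group and then evaluate that order through the standard short exact sequence relating ray class groups, ordinary ideal class groups, and unit groups modulo the modulus. By the main theorem of class field theory, the existence theorem \cite[Theorem 2.5.1]{NX01} gives $\Gal(F_S^D/F)\cong C_F/C_S^D$, so that $[F_S^D:F]=|C_F/C_S^D|$. Combining this with the isomorphism $C_F/C_S^D\cong \text{Cl}_D(\mathcal{O}_S)$ already recorded above reduces the claim to proving $|\text{Cl}_D(\mathcal{O}_S)|=\frac{1}{q-1}h_F\,t\,\phi(D)$.

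First I would invoke the fundamental exact sequence of the ray class group attached to the ideal $\mathfrak m_D=\prod_{j=1}^s Q_j^{c_j}$ of $\mathcal{O}_S$ corresponding to $D$:
$$1\longrightarrow \frac{(\mathcal{O}_S/\mathfrak m_D)^*}{\overline{\mathcal{O}_S^*}}\longrightarrow \text{Cl}_D(\mathcal{O}_S)\longrightarrow \text{Cl}(\mathcal{O}_S)\longrightarrow 1,$$
where $\overline{\mathcal{O}_S^*}$ is the image of the global units $\mathcal{O}_S^*$ under reduction $\mathcal{O}_S^*\to(\mathcal{O}_S/\mathfrak m_D)^*$. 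This sequence arises by forgetting the congruence condition modulo $D$: the right-hand map is the natural surjection onto the ideal class group, and its kernel is generated by the classes $[\alpha\mathcal{O}_S]$ of principal ideals with $\alpha$ running over $(\mathcal{O}_S/\mathfrak m_D)^*$, two such representatives giving the same ray class exactly when they differ by a global unit. Granting the sequence, one obtains
$$|\text{Cl}_D(\mathcal{O}_S)|=\frac{|(\mathcal{O}_S/\mathfrak m_D)^*|}{|\overline{\mathcal{O}_S^*}|}\cdot|\text{Cl}(\mathcal{O}_S)|,$$
so it remains to compute the three orders on the right.

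The three computations run as follows. (a) By the Chinese Remainder Theorem $(\mathcal{O}_S/\mathfrak m_D)^*\cong\prod_{j=1}^s(\mathcal{O}_S/Q_j^{c_j})^*$, and since the residue field of $Q_j$ has $q^{\deg Q_j}$ elements each factor has order $(q^{\deg Q_j}-1)q^{(c_j-1)\deg Q_j}$; multiplying gives $|(\mathcal{O}_S/\mathfrak m_D)^*|=\phi(D)$. (b) Because $S$ consists of the single place $P_\infty$, every unit of $\mathcal{O}_S$ has divisor supported on $P_\infty$ alone; a principal divisor has degree $0$ while $\deg P_\infty=t\neq 0$, so the divisor must vanish and $\mathcal{O}_S^*=\F_q^*$. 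The reduction $\F_q^*\to(\mathcal{O}_S/\mathfrak m_D)^*$ is injective, since a nonzero constant stays nonzero modulo each $Q_j$ (whose residue field contains $\F_q$), whence $|\overline{\mathcal{O}_S^*}|=q-1$. (c) For the $S$-class group I would identify $\text{Cl}(\mathcal{O}_S)$ with $\text{Pic}(F)/\langle[P_\infty]\rangle$, the divisor class group modulo the class of the unique place in $S$, and then combine the degree exact sequence
$$0\longrightarrow \text{Pic}^0(F)\longrightarrow \text{Pic}(F)\xrightarrow{\deg}\Z\longrightarrow 0,$$
in which $|\text{Pic}^0(F)|=h_F$, with the fact that $[P_\infty]$ has infinite order and degree $t$, so that $\langle[P_\infty]\rangle\cap\text{Pic}^0(F)=0$ and $\deg\langle[P_\infty]\rangle=t\Z$; the induced quotient sequence yields $|\text{Cl}(\mathcal{O}_S)|=h_F\cdot t$.

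Substituting (a)--(c) into the product displayed above gives $[F_S^D:F]=|\text{Cl}_D(\mathcal{O}_S)|=\frac{1}{q-1}h_F\,t\,\phi(D)$, as claimed. The step I expect to demand the most care is (c): one must verify both the identification $\text{Cl}(\mathcal{O}_S)\cong\text{Pic}(F)/\langle[P_\infty]\rangle$ (checking surjectivity and the triviality of the kernel of the natural map from divisors coprime to $S$) and the precise interaction of the cyclic group $\langle[P_\infty]\rangle$ with the degree filtration, which is exactly what produces the factor $t$ from a place of degree $t$. The factor $1/(q-1)$ is the sole contribution of the global units and hinges on $S$ being a single place.
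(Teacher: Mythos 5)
Your proof is correct, but there is nothing in the paper to compare it against: the paper never proves this lemma, it simply imports the degree formula from Auer \cite{Au00} (``the degree of the extension $F_S^D/F$ can be found from \cite{Au00}''). Your argument is the standard derivation and, in substance, the one behind the cited result: the existence theorem identifies $[F_S^D:F]$ with $|C_F/C_S^D|\cong |\mathrm{Cl}_D(\mathcal{O}_S)|$, and the order of that group follows from the exact sequence
$1\to(\mathcal{O}_S/\mathfrak{m}_D)^*/\overline{\mathcal{O}_S^*}\to \mathrm{Cl}_D(\mathcal{O}_S)\to \mathrm{Cl}(\mathcal{O}_S)\to 1$
together with exactly the three counts you give: $|(\mathcal{O}_S/\mathfrak{m}_D)^*|=\phi(D)$ by CRT, $|\overline{\mathcal{O}_S^*}|=q-1$ because $|S|=1$ forces every $S$-unit to have trivial divisor (its divisor is a multiple of $P_\infty$, which has nonzero degree $t$), hence $\mathcal{O}_S^*=\F_q^*$, and $|\mathrm{Cl}(\mathcal{O}_S)|=h_F\,t$ from $\mathrm{Cl}(\mathcal{O}_S)\cong\mathrm{Pic}(F)/\langle[P_\infty]\rangle$ combined with the degree sequence; this last step, which you rightly single out as the delicate one, is where the factor $t$ enters and your treatment of it is sound. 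One caveat worth recording: your exact sequence is valid only for the genuine ray class group, in which one quotients the ideals prime to $D$ by principal ideals $(\beta)$ with $\beta\equiv 1 \pmod{\mathfrak{m}_D}$; the paper's definition of $\mathrm{Cl}_D(\mathcal{O}_S)$ (``the quotient \ldots by its subgroup of principal ideals'') is stated too loosely, and read literally it would give order $h_F\,t$ rather than the claimed value. You implicitly use the correct definition --- your remark that two representatives give the same ray class exactly when they differ by a global unit only makes sense there --- so the proof stands, but that definition should be made explicit.
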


The following lemma is useful to determine the relationship of abelian extensions and ray class fields (see \cite[Theorem 2.5.4]{NX01}).
\begin{lemma}\label{lem: 2.2}{\bf (Conductor Theorem)}
Let $E/F$ be a finite abelian extension of global function fields and let $S$ be a non-empty finite subset of  $\mathbb{P}_F$ such that any place in $S$ splits completely in $E/F$. Denote the conductor Cond$(E/F)$ by $C$. Then
\begin{itemize}
\item[(i)] $E$ is a subfield of $F_S^C$.
\item[(ii)] If $D$ is a positive divisor of $F$ with supp$(D)\cap S=\emptyset$ and $E\subseteq F_S^D$, then $D\ge C$.
\end{itemize}
\end{lemma}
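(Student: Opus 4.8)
The plan is to translate the statement into the idele-theoretic language of class field theory and to read both assertions off the global Artin reciprocity map, so that everything reduces to a purely local comparison of unit filtrations with local norm groups. Write $\psi\colon J_F\to\Gal(E/F)$ for the global Artin map, $\pi\colon J_F\to C_F$ for the projection, and let $\psi=\overline\psi\circ\pi$ be the induced factorization, so that $N_E:=(F^*\cdot N_{E/F}(J_E))/F^*=\ker\overline\psi$ is the norm class group of $E$. The two facts I would build on are, first, that $\psi=\prod_P\psi_P$ decomposes as a product of the local maps $\psi_P\colon F_P^*\to\Gal(E/F)$ with $\psi(F^*)=1$ (the reciprocity law) and $\ker\psi=\pi^{-1}(N_E)=F^*\cdot N_{E/F}(J_E)$; and second, the order-reversing correspondence of class field theory, under which the ray class field $F_S^D$ is attached to $C_S^D=\pi(J_S^D)$. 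These combine into the chain of equivalences
\[
E\subseteq F_S^D \iff C_S^D\subseteq N_E \iff J_S^D\subseteq\pi^{-1}(N_E)=\ker\psi,
\]
valid for any modulus $D$ with $\supp(D)\cap S=\emptyset$. Alongside this I would use the local description of the conductor: for a place $Q$ of $E$ above $P$ the local norm group $N_P:=N_{E_Q/F_P}(E_Q^*)$ is independent of $Q$ (as $E/F$ is abelian) and equals $\ker\psi_P$, and the exponent $c_P(E/F)$ of the paper is the least integer $n$ with $U_{F_P}^{(n)}\subseteq N_P$. In particular $c_P(E/F)=0$ exactly when $P$ is unramified (Lemma \ref{lem: 2.1}(i)), so the hypothesis that each $P\in S$ splits completely forces $\supp(C)\cap S=\emptyset$ and makes $F_S^C$ well defined.

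Given this dictionary, part (i) is immediate. For $P\in S$ the local extension is trivial, so $\psi_P(F_P^*)=1$; for $P\notin S$ the minimality in the definition of $c_P(E/F)$ gives $U_{F_P}^{(c_P(E/F))}\subseteq\ker\psi_P$. Hence every idele of $J_S^C=\prod_{P\in S}F_P^*\times\prod_{P\notin S}U_{F_P}^{(c_P(E/F))}$ is annihilated coordinatewise by $\psi=\prod_P\psi_P$, so $J_S^C\subseteq\ker\psi$ and therefore $E\subseteq F_S^C$ by the equivalence above.

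For part (ii) I would argue one place at a time using single-coordinate ideles. Suppose $E\subseteq F_S^D$, so that $J_S^D\subseteq\ker\psi$. Fix $P_0\notin S$ and any $u\in U_{F_{P_0}}^{(m_{P_0})}$, and let $\iota\in J_F$ be the idele equal to $u$ at $P_0$ and to $1$ at every other place; then $\iota\in J_S^D$, whence $1=\psi(\iota)=\psi_{P_0}(u)$ and so $u\in\ker\psi_{P_0}=N_{P_0}$. As $u$ was arbitrary this says $U_{F_{P_0}}^{(m_{P_0})}\subseteq N_{P_0}$, and the local characterization of the conductor yields $m_{P_0}\ge c_{P_0}(E/F)$. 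Since $P_0\notin S$ was arbitrary and the two exponents agree (both vanish) on $S$, we conclude $D\ge C$.

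The only genuinely substantive input, and the step I expect to be the real work to justify, is the equivalence between the two descriptions of the conductor exponent: that the ramification-theoretic quantity $c_P(E/F)=(d(Q|P)+a(Q|P))/e(Q|P)$ coincides with the least $n$ for which $U_{F_P}^{(n)}\subseteq N_P$. This is the heart of local class field theory: the local Artin map carries the unit filtration $\{U_{F_P}^{(n)}\}$ onto the upper-numbering ramification filtration of $\Gal(E_Q/F_P)$, and the Hasse-Arf theorem guarantees that the jumps of that filtration occur at integers, which is precisely what makes the smallest $n$ with $U_{F_P}^{(n)}\subseteq\ker\psi_P$ equal to the ramification-theoretic exponent. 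Everything else in the argument is the formal local-global decomposition of $\psi$, with the presence of the diagonal $F^*$ neutralized cleanly by the single-coordinate idele trick.
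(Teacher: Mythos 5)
Your argument is correct, but there is nothing in the paper to compare it against: the paper states this lemma without proof, quoting it directly from Niederreiter--Xing \cite[Theorem 2.5.4]{NX01}. What you have reconstructed is essentially the standard idele-theoretic proof underlying that citation: Artin reciprocity plus the existence theorem convert the field inclusions $E\subseteq F_S^D$ into inclusions of (class) groups $J_S^D\subseteq\ker\psi$, part (i) then follows coordinatewise from the local conductor characterization, and part (ii) from single-coordinate ideles. You have also correctly isolated the one genuinely substantive input. Since the paper defines $c_P(E/F)$ by the ramification-theoretic formula $(d(Q|P)+a(Q|P))/e(Q|P)$, your proof needs the local conductor theorem identifying this with the least $n$ such that $U_{F_P}^{(n)}\subseteq N_{E_Q/F_P}(E_Q^*)$; this holds because $(d+a)/e=\varphi(a-1)+1$ is exactly the jump of the upper-numbering ramification filtration (here $\varphi$ is the Herbrand function), the local Artin map carries the unit filtration onto that filtration, and Hasse--Arf makes the jump integral. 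Granting local and global class field theory in this form, every step you give is sound, including the preliminary observation that complete splitting of the places in $S$ forces ${\rm supp}({\rm Cond}(E/F))\cap S=\emptyset$, so that $F_S^C$ is actually defined.
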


From the above lemma, it is easy to see that $F_S^D$ is the largest abelian extension $E$ over $F$ such that Cond$(E/F)\le D$ and every place in $S$ splits completely in $E$.

\subsection{Linear codes}
In this subsection, we briefly discuss linear codes. The reader may  refer to \cite{LX04} for the details.

A linear code $C$ of length $n$ over a finite field $\F_q$ is an $\F_q$-subspace of $\F_q^n$. The dimension of $C$, denoted by $\dim_{\F_q}(C)$, is defined to be the $\F_q$-dimension of $C$ as a vector space over $\F_q$. An element of $C$ is called a codeword. The Hamming weight of a vector in $\F_q^n$ is defined to be the number of nonzero coordinates. If $\dim_{\F_q}(C)>0$, then minimum distance of $C$ is defined to be the smallest Hamming weight of nonzero codewords in $C$.

An $m\times n$ matrix $G$ with entries in $\F_q$ is called a generator matrix of $C$ if (i) every row of $G$ is a codeword; and (ii) every codeword is a linear combination of rows of $G$. Note that most of textbooks require that the rows of $G$ are linearly independent. However, for convenience,  we do not require this condition in this paper.

The following result that characterizes the minimum distance of $C$ in terms of $G$ is well known in the coding community. However, this result is  not explicitly stated in literatures. For completeness, we provide a proof below.
\begin{lemma}\label{lem:2.3} Let $G$ be a generator matrix of $C$ of dimension $k$. Then the minimum distance of $C$ is $d$ if and only if
\begin{itemize}
\item[{\rm (i)}] any $n-d+1$ columns of $G$ form a matrix of rank $k$; and
\item[{\rm (ii)}] there exist $n-d$ columns of $G$ that form a matrix of rank at most $k-1$.
\end{itemize}
\end{lemma}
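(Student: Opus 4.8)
The plan is to establish the two conditions by relating the minimum distance of $C$ to the rank of column-subsets of $G$, working with the column span rather than row operations. Recall that $G$ is an $m\times n$ matrix whose \emph{rows} generate $C$, so a codeword has the form $\mathbf{v}G$ for some $\mathbf{v}\in\F_q^m$, and the weight of $\mathbf{v}G$ is the number of columns $\mathbf{g}_j$ (the $j$-th column of $G$) for which $\langle \mathbf{v},\mathbf{g}_j\rangle\neq 0$. The core observation is therefore the following dictionary: a codeword $\mathbf{v}G$ \emph{vanishes} on a set $J\subseteq[n]$ of coordinates precisely when $\mathbf{v}$ is orthogonal to every column indexed by $J$, i.e. when $\mathbf{v}$ lies in the left kernel of the submatrix $G_J$ formed by those columns. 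I will use this to convert weight statements into rank statements via the rank--nullity theorem.

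First I would reformulate the definition of minimum distance in terms of column submatrices. The minimum distance is $d$ iff (a) there is a nonzero codeword of weight $d$, and (b) no nonzero codeword has weight less than $d$. A nonzero codeword $\mathbf{v}G$ has weight at most $n-s$ exactly when it vanishes on some set of $s$ coordinates, which by the dictionary above means there is a nonzero $\mathbf{v}$ in the left kernel of the corresponding $m\times s$ column submatrix $G_J$. Since $G$ has $\F_q$-rank $k$ (as its rows generate a $k$-dimensional code), such a nonzero $\mathbf{v}$ exists iff $G_J$ has rank at most $k-1$: indeed the left kernel of $G_J$ inside the $k$-dimensional row space has positive dimension iff $\mathrm{rank}(G_J)<k$. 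Here one must be slightly careful because $G$ may have linearly dependent rows, so I would phrase the argument on the quotient of $\F_q^m$ by the right kernel of $G$ (equivalently, pass to a set of $k$ independent rows), on which $\mathbf{v}\mapsto \mathbf{v}G$ is injective; this identifies codewords with genuine vectors and makes ``$\mathbf{v}G=0\iff \mathbf{v}=0$'' legitimate.

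With this reformulation both directions follow cleanly. For the forward implication: if the minimum distance is $d$, then (b) says no nonzero codeword vanishes on a set of $n-d+1$ coordinates, so every $m\times(n-d+1)$ column submatrix has trivial relevant left kernel and hence rank $k$, giving (i); and (a) produces a weight-$d$ codeword vanishing on $n-d$ coordinates, so that submatrix has rank $\le k-1$, giving (ii). For the converse, (i) forces every codeword to be supported on at least $d$ coordinates (otherwise it would vanish on $\ge n-d+1$ coordinates, contradicting full rank there), so the minimum distance is $\ge d$, while (ii) exhibits a nonzero $\mathbf{v}$ killing $n-d$ columns, yielding a codeword of weight exactly $d$ and hence minimum distance $\le d$.

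The main obstacle, and the only genuinely subtle point, is the non-standard convention that the rows of $G$ need not be independent. This means the naive correspondence $\mathbf{v}\leftrightarrow \mathbf{v}G$ is not injective, so I cannot directly count dimensions of left kernels in $\F_q^m$. I expect to resolve this by consistently working modulo the right kernel of $G$, or equivalently by noting that since $\mathrm{rank}(G)=k=\dim_{\F_q}(C)$, the rank of any column submatrix $G_J$ equals $k$ minus the dimension of the space of codewords vanishing on $J$; with this identity the two conditions become transparent and the dependent-row issue disappears. Everything else is a routine application of rank--nullity.
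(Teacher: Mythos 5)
Your proof is correct and takes essentially the same route as the paper's: both translate weight statements into rank statements about column submatrices via left kernels, and both handle the possibly dependent rows of $G$ by reducing to a full-rank row set (the paper passes to a $k\times n$ submatrix $H$ whose rows form a basis of $C$, which is precisely the alternative you mention, while your explicit identity $\mathrm{rank}(G_J)=k-\dim\{\text{codewords vanishing on }J\}$ is a cleaner packaging of the same argument). One terminological nit: the kernel you quotient $\mathbb{F}_q^m$ by, namely $\{\mathbf{v}:\mathbf{v}G=0\}$, is the \emph{left} kernel of $G$, not the right kernel.
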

\begin{proof} Let $H$ be a $k\times n$ submatrix of $G$ such that the rows of $H$ form a basis of $C$.
Assume that the minimum distance of $C$ is $d$. Then there exists a codeword $\bc$ of $C$ with Hamming weight $d$. Without loss of generality, we may assume that the last $d$ positions of $\bc$ are nonzero. As $\bc$ is a nonzero linear combination of the rows of $H$, this implies that the first $n-d$ rows are linearly independent, i.e., the rank of  the first $n-d$ rows of $H$ is less than $k$. As every row of $G$ is a linear combination of the rows of $H$,  the rank of  the first $n-d$ rows of $H$ is equal to the rank of  the first $n-d$ rows of $G$. This proves (ii). Now suppose that, without loss of generality, the first  $n-d+1$ columns of $G$  form a matrix of rank less than $k$, then the first  $n-d+1$ columns of $G$  form a matrix of rank less than $k$ as well. This implies that there exists a nonzero codeword with the first $n-d+1$ positions equal to $0$, i.e., this codeword has Hamming weight at most $d-1$. This contradicts the fact that $d(C)=d$.

Now assume that both (i) and (ii) hold. Again without loss of generality, we assume that the first $n-d$ columns of $G$ has rank less than $k$. Then the fist $n-d$ columns of $G$ has rank less than $k$ as well. There there is a nonzero codeword $\bc$ that is a linear combination of rows of $H$ such that the first $n-d$ positions are zero. This implies that the Hamming weight of $\bc$ is at most $d$, i.e., $d(C)\le d$. Suppose that $d(C)<d$, then in a similar way one can show that there are $n-k+1$ columns that form a matrix of rank less than $k$. This conviction shows that $d(C)\ge d$. The proof is completed.
\end{proof}

\section{Artin-Schreier curves as subfields of cyclotomic function fields}\label{sec: 3}
In this section, we will determine the conductors of Artin-Schreier curves and the cyclotomic function fields over the rational function field $\F_q(x)$ with modulus $x^{-n-1}$ for any integer $n\ge 1$. In particular, we will show that the function field of an Artin-Schreier curve $y^p-y=f(x)$ with $f(x)\in \F_q[x]$ can be viewed as a subfield of some cyclotomic function field. From now onwards, we denote by $K$ the rational function field $\F_q(x)$.

\subsection{Artin-Schreier curves}
Let $p$ be a prime and let $q=p^m$ for some integer $m\ge 1$. Consider the Artin-Scherier curve defined by \[y^p-y=f(x),\]
where $f(x)$ is a polynomial of degree $r$ in $\F_q[x]$.
The corresponding function field is given by $E_f:=\F_q(x,y)$ with $y^p-y=f(x).$
From \cite[Proposition 3.7.8]{St09},  the properties of Artin-Schreier curves can be summarized as follows.

\begin{lemma}\label{lem: 3.1}
Let $K/\F_q$ be the rational function field of characteristic $p>0$. Let $f(x)$ be a polynomial in $\F_q[x]$ of degree $r$ with $\gcd(r,p)=1$.
Let \[E_f=K(y)=\F_q(x,y) \text{ with } y^p-y=f(x).\] For any place $P\in \mathbb{P}_K$, we define the integer $m_P$ by
$$
m_P=\begin{cases}
\ell &\text{ if } \exists  z\in K \text{ satisfying } \nu_P(u-(z^p-z))=-\ell<0 \text{ and } \ell \not \equiv 0 (\text{mod }p),\\
-1 & \text{ if } \nu_P(u-(z^p-z))\ge 0 \text{ for some } z\in K.
\end{cases}$$
Then we have:
\begin{itemize}
\item[(a)] $E_f/K$ is a cyclic Galois extension of degree $p$. The automorphisms of $E_f/K$ are given by $\sigma(y)=y+u$ with $u=0,1,\cdots,p-1.$
\item[(b)] $P$ is unramified in $E_f/K$ if and only if $m_P=-1$.
\item[(c)] $P$ is totally ramified in $E_f/K$  if and only if $m_P>0$. Denote by $P^\prime$ the unique place of $E_f$ lying over $P$. Then the different exponent $d(P^\prime|P)$ is given by \[d(P^\prime|P)=(p-1)(m_P+1).\]
\item[(d)] The infinity place $\infty$ of $K$, i.e., the pole of $x$, is the unique ramified place in $E_f/K$, the full constant field of $E_f$ is $\F_q$ and
\[g(E_f)=\frac{(p-1)(r-1)}{2}.\]
\end{itemize}
\end{lemma}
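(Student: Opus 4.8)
The plan is to follow the classical Artin--Schreier theory, proving the four parts in turn and isolating the local different computation in (c) as the technical heart. For part (a), I would first show that $T^p-T-f(x)$ is irreducible over $K$. In characteristic $p$ such a polynomial either splits completely (when it has a root) or is irreducible; a root $z\in K$ would give $f(x)=z^p-z$, but since $f(x)$ has no finite poles we must have $z\in\F_q[x]$, and then $z^p-z$ has degree divisible by $p$, contradicting $\gcd(r,p)=1$ with $r\ge 1$. Hence $[E_f:K]=p$, and as the roots are $\{y+u:u\in\F_p\}$ the extension is Galois with cyclic group generated by $\sigma:y\mapsto y+1$, so $\Gal(E_f/K)=\{y\mapsto y+u:u\in\F_p\}$.

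For the local parts (b) and (c), the central device is the substitution $y\mapsto y-z$ with $z\in K$, under which $u:=f(x)$ becomes $u-(z^p-z)$ while $E_f$ is unchanged; this is exactly the quantity optimized in the definition of $m_P$. If $m_P=-1$, I would choose $z$ so that $\nu_P(u-(z^p-z))\ge0$, reduce $T^p-T-u$ modulo $P$ to a separable polynomial over the residue field, and conclude via Kummer--Dedekind theory that $P$ is unramified, giving (b). If $m_P=\ell>0$ with $p\nmid\ell$, I would show total ramification by a valuation count: for a place $P'\mid P$ with ramification index $e$, one has $\nu_{P'}(u)=-e\ell$, and since $\nu_{P'}(y)\ge0$ would force $\nu_{P'}(u)\ge0$, we get $\nu_{P'}(y)<0$ and hence $p\,\nu_{P'}(y)=\nu_{P'}(y^p-y)=-e\ell$; as $\gcd(\ell,p)=1$ this requires $p\mid e$, and with $e\le p$ we obtain $e=p$ and $\nu_{P'}(y)=-\ell$.

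The main obstacle is the different exponent $d(P'|P)=(p-1)(\ell+1)$ in (c), which I would obtain from Hilbert's different formula after computing the ramification groups $G_i(P'|P)$. Since $[E_f:K]=p$ is prime, each $G_i$ is either all of $\Gal(E_f/K)$ or trivial, so there is a single jump at some index $s$. To locate it I would build a uniformizer at $P'$: with $t$ a uniformizer of $P$ one has $\nu_{P'}(y)=-\ell$ and $\nu_{P'}(t)=p$, and since $\gcd(\ell,p)=1$ there are integers $c,d$ with $c\ell+dp=1$, so $\pi=y^{-c}t^{d}$ satisfies $\nu_{P'}(\pi)=1$. Writing
\[
\sigma(\pi)-\pi=\pi\big((1+y^{-1})^{-c}-1\big)
\]
and noting $\nu_{P'}(y^{-1})=\ell$ together with $c\not\equiv0\pmod p$ (forced by $c\ell\equiv1\pmod p$), the leading term $-c\,y^{-1}$ yields $\nu_{P'}(\sigma(\pi)-\pi)=\ell+1$, so $s=\ell$ and $G_0=\cdots=G_\ell=\Gal(E_f/K)$, $G_{\ell+1}=\{\mathrm{id}\}$. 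Hilbert's formula then gives $d(P'|P)=\sum_{i=0}^{\ell}(|G_i|-1)=(\ell+1)(p-1)$, proving (c).

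For part (d) I would determine the ramification globally and then apply the Hurwitz genus formula. A finite place $P_\Ga$ has $\nu_{P_\Ga}(f(x))\ge0$, so $m_{P_\Ga}=-1$ and $P_\Ga$ is unramified by (b); at $\infty$ the pole order of $f(x)$ is $r$ with $\gcd(r,p)=1$, and since subtracting any $z^p-z$ (with $z\in\F_q[x]$ of degree $d$) only changes the pole order by a multiple of $p$, it cannot be lowered below $r$, whence $m_\infty=r>0$ and $\infty$ is totally ramified by (c) with $d(\infty'|\infty)=(p-1)(r+1)$. Thus $\infty$ is the unique ramified place. Applying Hurwitz with $g(K)=0$,
\[
2g(E_f)-2=p(-2)+(p-1)(r+1)=(p-1)(r-1)-2,
\]
so $g(E_f)=(p-1)(r-1)/2$. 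Finally, since $[E_f:K]=p$ is prime, any constant field extension inside $E_f$ equals $K$ or $E_f$; the latter is impossible because a constant field extension is unramified everywhere while $\infty$ ramifies, so the full constant field of $E_f$ is $\F_q$.
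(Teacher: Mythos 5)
Your proof is correct, but there is nothing in the paper to compare it against: the paper does not prove Lemma 3.1 at all, it simply quotes it from \cite[Proposition 3.7.8]{St09}. What you have written is, in effect, a reconstruction of the standard textbook argument behind that citation: Artin--Schreier irreducibility for (a); the substitution $y\mapsto y-z$ plus reduction of $T^p-T-u$ modulo $P$ for (b); the valuation count $p\,\nu_{P'}(y)=-e\ell$ forcing $e=p$, then the prime element $\pi=y^{-c}t^{d}$ and Hilbert's different formula for (c); and Hurwitz plus unramifiedness of constant field extensions for (d). All of these steps check out. Two points deserve explicit justification. First, in (c) you determine the ramification groups $G_i(P'|P)$ by evaluating $\nu_{P'}(\sigma(\pi)-\pi)$ on the single element $\pi$, whereas the definition quantifies over all $z\in\mathcal{O}_{P'}$; this reduction is legitimate exactly because $P'|P$ is totally ramified (so the completion of $\mathcal{O}_{P'}$ is generated over that of $\mathcal{O}_P$ by a prime element), and this standard fact should be cited rather than used silently. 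Note that your computation $G_0=\cdots=G_\ell=\Gal(E_f/K)$, $G_{\ell+1}=\{1\}$ is precisely the statement the paper itself later imports ``from the proof of \cite[Proposition 3.7.8]{St09}'' in its Proposition 3.2, so your route and the cited source's route coincide here. Second, the definition of $m_P$ presupposes that every place falls into one of the two cases (in general this is \cite[Lemma 3.7.7]{St09}, which uses perfectness of the residue fields); for a polynomial $f$ your part (d) verifies the dichotomy directly, with $z=0$ working at every place, so your treatment is complete for the lemma as stated, but it would be worth a sentence acknowledging that this exhaustiveness is what makes the ``if and only if'' in (b) and (c) meaningful.
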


\begin{proposition}\label{prop: 3.2}
Let $f(x)$ be a polynomial in $\F_q[x]$ of degree $r\ge 1$ with $\gcd(r,p)=1$. Let $\infty$ be the pole of $x$ in the rational function field $K$. Denote by $E_f$  the Artin-Schreier curve defined by $y^p-y=f(x)$. Then the conductor of $E_f/K$ is \[ \text{Cond}(E_f/K)=(r+1) \infty.\]
\end{proposition}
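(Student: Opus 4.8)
**The plan is to compute the conductor exponent at each place of $K$ directly from its definition, using the ramification data already packaged in Lemma~\ref{lem: 3.1}.** By the Conductor definition, $\text{Cond}(E_f/K)=\sum_P c_P(E_f/K)\,P$, and by part (b) of Lemma~\ref{lem: 3.1} the only place that can contribute is the infinity place $\infty$, since every finite place $P$ has $m_P=-1$ and hence is unramified (so $c_P=0$ by Lemma~\ref{lem: 2.1}(i)). Thus the whole problem reduces to evaluating $c_\infty(E_f/K)$, and the goal is to show it equals $r+1$.

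\emph{Computing $c_\infty$.} Since $E_f/K$ is abelian of degree $p$ (part (a)), the extension at $\infty$ is a totally wildly ramified $\Z/p$-extension, so $e(Q|\infty)=p$, there is a unique place $Q$ above $\infty$, and the ramification groups $G_i=G_i(Q|\infty)$ are all either the full group $\Gal(E_f/K)\cong\Z/p$ or trivial. Let $Q|P$ with $P=\infty$. I would first determine $m_\infty$: since $f(x)$ has degree $r$ with $\gcd(r,p)=1$, the pole order of $f(x)$ at $\infty$ is $r$, and $r\not\equiv 0\pmod p$, so no substitution $y\mapsto y-z$ with $z\in K$ can lower this pole order below $r$; hence $m_\infty=r$. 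Part (c) then gives the different exponent
\[
d(Q|\infty)=(p-1)(m_\infty+1)=(p-1)(r+1).
\]
Next I need $a(Q|\infty)$, the least $l$ with $G_i$ trivial for all $i\ge l$. Because $G_i$ jumps from $\Z/p$ to $\{1\}$ at a single index, Hilbert's Different formula
$d(Q|\infty)=\sum_{i\ge 0}(|G_i|-1)$
forces $G_0=G_1=\cdots=G_{m_\infty}=\Z/p$ and $G_i=\{1\}$ for $i>m_\infty$: indeed $\sum_{i=0}^{m_\infty}(p-1)=(m_\infty+1)(p-1)=(r+1)(p-1)$ matches $d(Q|\infty)$ exactly. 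Therefore the last nontrivial ramification group sits in degree $m_\infty=r$, and the smallest $l$ making all higher groups trivial is $a(Q|\infty)=m_\infty+1=r+1$.

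\emph{Assembling the conductor exponent.} Plugging into the definition,
\[
c_\infty(E_f/K)=\frac{d(Q|\infty)+a(Q|\infty)}{e(Q|\infty)}
=\frac{(p-1)(r+1)+(r+1)}{p}
=\frac{p(r+1)}{p}=r+1.
\]
Since every finite place contributes $0$, this yields $\text{Cond}(E_f/K)=(r+1)\,\infty$, as claimed.

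\emph{Anticipated obstacle.} The one genuinely delicate point is the exact determination of $a(Q|\infty)$, i.e.\ pinning down \emph{where} the ramification filtration drops. The clean computation above leans on the fact that a degree-$p$ cyclic group has no proper nontrivial subgroups, so the filtration has a single jump and Hilbert's formula determines its location uniquely from $d(Q|\infty)$. I should verify carefully that $m_\infty=r$ (that the pole order of $f$ truly cannot be reduced mod $p$), since $\gcd(r,p)=1$ is exactly what guarantees this; if $r$ were divisible by $p$ the analysis of $m_\infty$, and hence of the jump, would be more subtle. A clean alternative that sidesteps computing $a$ by hand is to invoke Lemma~\ref{lem: 2.3} or the standard identity relating the conductor exponent to the last ramification break (for a $\Z/p$-extension, $c_\infty$ equals one plus the unique ramification break in lower numbering), but the direct route through $d(Q|\infty)$ and $a(Q|\infty)$ is self-contained and I expect it to be the cleanest to write.
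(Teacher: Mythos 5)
Your proof is correct and follows essentially the same route as the paper: both reduce to computing $c_\infty(E_f/K)$, use $\gcd(r,p)=1$ to get $m_\infty=r$ and hence $d(Q|\infty)=(p-1)(r+1)$, locate the single ramification break at $i=r$ (so $a(Q|\infty)=r+1$), and assemble $c_\infty=\frac{d+a}{e}=r+1$. The only difference is in how the break is pinned down: the paper cites the explicit valuation computation $\nu_{P_\infty}(\sigma(t)-t)=r+1$ from the proof of \cite[Proposition 3.7.8]{St09}, whereas you deduce it group-theoretically from Hilbert's different formula together with the fact that $\Gal(E_f/K)\cong\Z/p\Z$ has no proper nontrivial subgroups, so the filtration has a unique jump whose location is forced by $d(Q|\infty)$ --- a legitimate and slightly more self-contained variant of the same computation.
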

\begin{proof}
Let $P_\infty$ be the place of $E_f$ lying above $\infty$.
From Lemma \ref{lem: 3.1}, $P_\infty|\infty$ is the unique ramified place in $E_f/K$.
Then we have $\nu_{\infty}(f(x))=-\deg(f(x))=-r<0 \text{ and } \gcd(r,p)=1$, i.e., $m_{\infty}=r$. Hence, the different exponent of $P_\infty|\infty$ is given by
$$d(P_\infty|\infty)=(p-1)(m_{\infty}+1)=(p-1)(r+1).$$
Let $\sigma$ be the automorphism of $E_f/K$ defined by $\sigma(y)=y+u$ with $u\in \F_p$ and let $t$ be a prime element of $P_\infty$.
Then we have $\nu_{P_\infty}(\sigma(t)-t)=m_{\infty}+1=r+1 \text{ for } \sigma\neq 1$ from the proof of \cite[Proposition 3.7.8]{St09}. Hence, the higher ramification groups of $P_\infty|\infty$ are given by
$$G_0(P_\infty|\infty)=G_1(P_\infty|\infty)=\cdots=G_r(P_\infty|\infty)=\Gal(E_f/K) \text{ and } G_{r+1}(P_\infty|\infty)=\{1\}.$$
Thus, the conductor exponent of $\infty$ in $E_f/K$ is
$$c_\infty(E_f/K)=\frac{d(P_\infty|\infty)+a(P_\infty|\infty)}{e(P_\infty|\infty)}=\frac{(p-1)(r+1)+(r+1)}{p}=r+1.$$
This completes the proof from Lemma \ref{lem: 2.1}.
\end{proof}

\subsection{Subfields of cyclotomic function fields}
Let $T=(x-\a)^{-1}$ for $\a\in \F_q$ and $n$ be a positive integer. Then we have $K=\F_q(x)=\F_q(T)$. Let $K_n$ be the cyclotomic function field $K(\Lambda_{T^{n+1}})$ with modulus $T^{n+1}$ over $K$. In fact, the cyclotomic function field $K_n$ is an abelian extension over $K$ of degree $q^n(q-1)$.
Moreover, we have the following facts from \cite{Ha74, MXY16}:

\begin{lemma}\label{lem: 3.3}
Let $K_n = K(\Lambda_{T^{n+1}})$ denote the cyclotomic function field with modulus $T^{n+1}$ over the rational function field $K$. Then one has
\begin{itemize}
\item[\rm (i)] ${\rm Gal}(K_n/K) \cong (\mathbb{F}_q[T]/(T^{n+1}))^*.$ The Galois automorphism $\sigma_f$ associated to $\overline{f}\in (\mathbb{F}_q[T]/(T^{n+1}))^*$ is determined by $\sigma_f(\lambda)=\lambda^f$ for any generator $\lambda\in \Lambda_{T^{n+1}}$.
\item[\rm (ii)]The zero place of $T$ in $K,$ which is the pole $\infty$ of $x$ in $K$, is totally ramified in $K_n$. Let $Q_\infty$ denote the unique place of $K_n$ lying over $\infty$. The different exponent of $Q_\infty|\infty$ is $d(Q_\infty|\infty)=(n+1)q^n(q-1)-q^n$.
\item[\rm (iii)]The pole place of $T$ in $K$, which is the zero place $P_\a$ of $x-\a$ in $K$, splits into $q^n$ rational places with ramification index $q-1$. In particular, $\mathbb{F}_q$ is the full constant field of $K_n$.
\item[\rm (iv)] A monic irreducible polynomial $P\in \F_q[T]$ is unramified in $K_n/K$ if $P$ does not divide $T^{n+1}$.
\item[\rm (v)] For a monic irreducible polynomial $P\in \F_q[T]$ not dividing  $T^{n+1}$, the Artin symbol $\left[\frac{K_n/K}{P}\right]$ satisfies
$$\left[\frac{K_n/K}{P}\right]: \lambda\mapsto \lambda^P$$ for any generator $\lambda$ of $\Lambda_{T^{n+1}}$.
\item[\rm (vi)] The automorphism group of $K_n$ over $\F_q$ is given by $$\Aut(K_n/\F_q):=\{\sigma: K_n\rightarrow K_n| \sigma \text{ is an } \F_q\text{-automorphism of } K_n\}=\Gal(K_n/K).$$
\end{itemize}
\end{lemma}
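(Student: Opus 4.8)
The plan is to obtain (i)--(v) as specializations of the general cyclotomic theory in Proposition \ref{genusofcyclotomic}, and to treat (vi) by hand, since it is the only assertion that is not a formal consequence of that proposition.

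First I would set up the coordinate dictionary induced by $T=(x-\alpha)^{-1}$. As $T$ is a M\"obius function of $x$ we have $K=\F_q(x)=\F_q(T)$, and $K_n=K(\Lambda_{T^{n+1}})$ is precisely the cyclotomic function field $K(\Lambda_{P^{n+1}})$ attached to the monic \emph{linear} prime $P=T\in\F_q[T]$; thus Proposition \ref{genusofcyclotomic} applies with $\deg P=1$ and with its exponent taken to be $n+1$. Under $T=(x-\alpha)^{-1}$ the zero of $T$ is the pole $\infty$ of $x$, the pole of $T$ (the infinite place of $\F_q(T)$) is the zero $P_\alpha$ of $x-\alpha$, and the remaining finite places of $\F_q(T)$ are the $P_\beta$ with $\beta\neq\alpha$.

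With this dictionary, (i) is Proposition \ref{genusofcyclotomic}(ii); (ii) is Proposition \ref{genusofcyclotomic}(iii) after substituting $\deg P=1$ and exponent $n+1$, which turns the different exponent into $(n+1)q^{n}(q-1)-q^{n}$ and identifies the totally ramified prime as $\infty$; (iii) is Proposition \ref{genusofcyclotomic}(iv), where $\phi(T^{n+1})=q^{n}(q-1)$ makes the pole of $T$, namely $P_\alpha$, split into $\phi(T^{n+1})/(q-1)=q^{n}$ places of ramification index $q-1$, all rational since $\F_q$ is the full constant field; and (iv) is the clause of Proposition \ref{genusofcyclotomic}(iii) that every finite place other than the ramified one is unramified, the only monic irreducible dividing $T^{n+1}$ being $T$ itself. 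For (v) I would invoke the Artin reciprocity law for cyclotomic function fields: $P\nmid T^{n+1}$ is unramified by (iv), and under the isomorphism of (i) its Artin symbol is the automorphism $\sigma_P\colon\lambda\mapsto\lambda^{P}$ corresponding to the residue of $P$ modulo $T^{n+1}$, exactly Hayes' description of the Frobenius.

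The substance is (vi). The inclusion $\Gal(K_n/K)\subseteq\Aut(K_n/\F_q)$ is immediate, so the point is to show every $\sigma\in\Aut(K_n/\F_q)$ fixes $K$ pointwise. Granting for a moment that $\sigma(K)=K$, the restriction $\sigma|_K$ lies in $\Aut(K/\F_q)\cong\mathrm{PGL}_2(\F_q)$ and must permute the ramification locus of $K_n/K$; by (ii)--(iv) this locus is $\{\infty,P_\alpha\}$ with $\infty$ wild and $P_\alpha$ tame, so $\sigma|_K$ fixes both $x=\infty$ and $x=\alpha$ and is therefore a dilation $T\mapsto a^{-1}T$ for some $a\in\F_q^{*}$. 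To rule out $a\neq1$ I would use the Kummer subfield $K(\Lambda_T)=K\bigl((-T)^{1/(q-1)}\bigr)\subseteq K_n$: applying $\sigma$ to $z^q+Tz=0$ gives $\sigma(z)^q+a^{-1}T\,\sigma(z)=0$, so $\sigma\bigl(K(\Lambda_T)\bigr)=K\bigl((-a^{-1}T)^{1/(q-1)}\bigr)$ is again inside $K_n$. Then $K_n$ contains both $(-T)^{1/(q-1)}$ and $(-a^{-1}T)^{1/(q-1)}$, hence their ratio, a $(q-1)$-th root of $a^{-1}$; this is a constant, and it lies in $\F_q$ only when $a=1$. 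Since $\F_q$ is the full constant field of $K_n$ by (iii), we get $a=1$, so $\sigma|_K=\mathrm{id}$ and $\sigma\in\Gal(K_n/K)$.

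The genuinely hard step, and the one I expect to be the main obstacle, is justifying $\sigma(K)=K$ in the first place: all of the ramification data in (ii)--(iv) is phrased relative to $K$, so one must first isolate a property of the distinguished place $Q_\infty$ that is intrinsic to $K_n/\F_q$. The natural candidate is that $Q_\infty$ is the unique totally and wildly ramified place of the cyclotomic structure, carrying the largest different exponent and nontrivial higher ramification groups; since $\sigma$ transports $K_n/K$ to the isomorphic extension $K_n/\sigma(K)$, it must fix this intrinsically distinguished place and therefore stabilize $K$. Making this characterization precise is exactly the automorphism-group computation of \cite{MXY16}, which I would cite; the reduction to a dilation and its elimination above are then routine.
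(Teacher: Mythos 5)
Your proposal is correct and in substance the same as the paper's treatment: the paper gives no proof of Lemma \ref{lem: 3.3} at all, quoting it as known facts from \cite{Ha74, MXY16}, and your argument rests on exactly those same sources --- Proposition \ref{genusofcyclotomic} (itself quoted cyclotomic theory) for (i)--(iv), Hayes' reciprocity law for (v), and \cite{MXY16} for the one genuinely hard step of (vi), namely that every $\F_q$-automorphism of $K_n$ stabilizes $K$. The supplementary details you work out yourself (the dictionary induced by $T=(x-\alpha)^{-1}$, the specialization $d=1$ with exponent $n+1$, and the Kummer/constant-field elimination of the dilations $T\mapsto a^{-1}T$) are correct.
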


In the following, we need to determine the conductor of $K_n/K$.

\begin{proposition}\label{prop: 3.4}
Let $\infty$ be the pole place of $x$ and let $P_\a$ be the zero place of $x-\a$ in $K$. Then the conductor of $K_n/K$ is
$$\text{Cond}(K_n/K)=(n+1)\cdot \infty+\min\{1, q-2\}\cdot P_\a.$$
Furthermore, let $F$ be the subfield of $K_n$ fixed by $\F_q^*$, then the conductor of $F/K$ is
$$\text{Cond}(F/K)=(n+1)\cdot \infty.$$
\end{proposition}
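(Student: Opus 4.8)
The plan is to identify the ramified places of $K_n/K$ from Lemma~\ref{lem: 3.3}, compute each conductor exponent directly from the definition $c_P(K_n/K)=\big(d(Q|P)+a(Q|P)\big)/e(Q|P)$, and then obtain $\text{Cond}(F/K)$ from the tower $K\subseteq F\subseteq K_n$ via Lemma~\ref{lem: 2.3}. By Lemma~\ref{lem: 3.3}(ii)--(iv), the only places of $K$ ramifying in $K_n/K$ are the zero place of $T$, namely $\infty$, which is totally and wildly ramified, and the pole place of $T$, namely $P_\a$, whose ramification index is $q-1$; every other finite place is unramified and so contributes $0$ to the conductor by Lemma~\ref{lem: 2.1}(i).

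The place $P_\a$ is easy: since $\gcd(q-1,p)=1$, it is tamely ramified when $q\ge 3$ and, when $q=2$, its ramification index $q-1=1$ makes it unramified. Lemma~\ref{lem: 2.1} then gives $c_{P_\a}(K_n/K)=1$ for $q\ge 3$ and $c_{P_\a}(K_n/K)=0$ for $q=2$, that is, $c_{P_\a}(K_n/K)=\min\{1,q-2\}$.

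The substantive point is the exponent at $\infty$, where I must show $c_\infty(K_n/K)=n+1$. Here $e(Q_\infty|\infty)=q^n(q-1)$ and, by Lemma~\ref{lem: 3.3}(ii), $d(Q_\infty|\infty)=(n+1)q^n(q-1)-q^n$, so it suffices to prove $a(Q_\infty|\infty)=q^n$, i.e. that $G_{q^n-1}(Q_\infty|\infty)\ne\{1\}$ while $G_{q^n}(Q_\infty|\infty)=\{1\}$. To do this I would transport the filtration of $\Gal(K_n/K)\cong(\F_q[T]/(T^{n+1}))^*$ by the unit groups $U^{(k)}=1+T^k\F_q[T]/(T^{n+1})$ onto the lower ramification filtration, establishing $G_i(Q_\infty|\infty)=U^{(k)}$ for $q^{k-1}\le i\le q^k-1$ ($k=1,\dots,n$) and $G_i=\{1\}$ for $i\ge q^n$. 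This identification is the main obstacle: it is not formal and rests on the explicit ramification theory of Carlitz cyclotomic extensions. As a consistency check, substituting this filtration into Hilbert's Different Formula recovers precisely the value $d(Q_\infty|\infty)$ above; it yields $a(Q_\infty|\infty)=q^n$, whence $c_\infty(K_n/K)=\big((n+1)q^n(q-1)-q^n+q^n\big)/\big(q^n(q-1)\big)=n+1$. (Since $K_n/K$ is abelian, one may alternatively invoke the Hasse--Arf theorem and locate the largest upper-numbering jump at $n$.) Combining the three contributions gives $\text{Cond}(K_n/K)=(n+1)\infty+\min\{1,q-2\}P_\a$.

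For the field $F$ fixed by $\F_q^*$, observe that $\Gal(K_n/F)=\F_q^*$ has order $q-1$ coprime to $p$, so $K_n/F$ is at most tamely ramified and $c_Q(K_n/F)\le 1$ at every place $Q$ by Lemma~\ref{lem: 2.1}. At $P_\a$ the inertia group in $K_n/K$ has order $q-1$; since $(\F_q[T]/(T^{n+1}))^*\cong\F_q^*\times U^{(1)}$ with $|U^{(1)}|=q^n$ coprime to $q-1$, the unique subgroup of order $q-1$ is $\F_q^*$, so this inertia group equals $\Gal(K_n/F)$ and $P_\a$ becomes unramified in $F/K$, giving $c_{P_\a}(F/K)=0$. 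At $\infty$ I would apply Lemma~\ref{lem: 2.3} to $K\subseteq F\subseteq K_n$: the chain $c_\infty(F/K)\le c_\infty(K_n/K)\le\max\{c_\infty(F/K),c_Q(K_n/F)\}$ together with $c_\infty(K_n/K)=n+1\ge 2>1\ge c_Q(K_n/F)$ forces $c_\infty(F/K)=n+1$. Hence $\text{Cond}(F/K)=(n+1)\infty$, as claimed.
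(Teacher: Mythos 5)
Your overall skeleton is the same as the paper's: identify $\infty$ and $P_\a$ as the only ramified places, get $c_{P_\a}(K_n/K)=\min\{1,q-2\}$ from tameness (resp.\ unramifiedness when $q=2$), reduce $c_\infty(K_n/K)=n+1$ to showing $a(Q_\infty|\infty)=q^n$, and then run the two-sided inequality of Lemma \ref{lem: 2.3} in the tower $K\subseteq F\subseteq K_n$ to conclude $\text{Cond}(F/K)=(n+1)\cdot\infty$. Those parts are all correct, and your argument that the inertia group at $P_\a$ must equal $\F_q^*$ (the unique subgroup of order $q-1$ in $\F_q^*\times U^{(1)}$, the factors having coprime orders) is, if anything, cleaner than the paper's terse assertion that $P_\a$ is unramified in $F/K$.

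The genuine gap is exactly the step you flag yourself: you assert the lower ramification filtration $G_i(Q_\infty|\infty)=U^{(k)}$ for $q^{k-1}\le i\le q^k-1$ and $G_i=\{1\}$ for $i\ge q^n$, but you do not prove it, and this is the entire content of the claim $c_\infty(K_n/K)=n+1$. The paper closes this gap with a short direct computation rather than an appeal to general Carlitz theory: a generator $\lambda$ of $\Lambda_{T^{n+1}}$ is a prime element at $Q_\infty$ which generates $\mathcal{O}_{Q_\infty}$ over $\mathcal{O}_\infty$ (the extension being totally ramified), and since $\lambda^T=\lambda^q+T\lambda$ with $\nu_{Q_\infty}(T)=q^n(q-1)$, induction gives $\nu_{Q_\infty}(\lambda^{T^k})=q^k$. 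Writing $\sigma=\sigma_f$ with $f=\sum_{i=0}^n a_iT^i$ and using $\F_q$-linearity of the Carlitz action, one has $\sigma(\lambda)-\lambda=(a_0-1)\lambda+a_1\lambda^T+\cdots+a_n\lambda^{T^n}$, and since the candidate valuations $1,q,\dots,q^n$ are pairwise distinct, $\nu_{Q_\infty}(\sigma(\lambda)-\lambda)$ equals $q^k$ for the least index $k$ whose coefficient is nonzero. This pins down every $G_i(Q_\infty|\infty)$, yields $a(Q_\infty|\infty)=q^n$, and hence $c_\infty(K_n/K)=\big((n+1)q^n(q-1)-q^n+q^n\big)/\big(q^n(q-1)\big)=n+1$. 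So your plan is the right one and your claimed filtration agrees with the paper's, but as written the proposal outlines the decisive computation instead of performing it; the fix is the half-page valuation argument above, not a deep external theorem.
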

\begin{proof}
Let $P_\a$ be the zero place of $x-\a$ in $K$. In fact, $P_\a$ is the pole place of $T$ in $K$.
Let $Q_\a$ be any place of $K_n$ lying above $P_\a$. Then we have $e(Q_\a|P_\a)=q-1$ from Lemma \ref{lem: 3.3}(iii).
If $q>2$, then $Q_\a|P_\a$ is tamely ramified and the conductor exponent of $P_\a$ in $K_n/K$ is $1$ from Lemma \ref{lem: 2.1}(ii); otherwise,  $Q_\a|P_\a$ is unramified and the conductor exponent of $P_\a$ in $K_n/K$ is $0$ from Lemma \ref{lem: 2.1}(i).

Let $\infty$ be the pole place of $x$. From Lemma \ref{lem: 3.3}(ii), $\infty$ is totally ramified in $K_n/K$.
Let $Q_\infty$ denote the unique place of $K_n$ lying over $\infty$. Then we have $e(Q_\infty|\infty)=q^{n}(q-1)$ and the following claim holds true.

{\bf Claim:} The conductor exponent of $\infty$ in $K_n/K$ is $n+1$.

For any automorphism $\sigma\in \Gal(K_n/K)$, there exists an equivalence class of a polynomial $f(T)=\sum_{i=0}^{n} a_iT^i$ in $\F_q[T]/(T^{n+1})$ such that $\sigma(\lambda)=\lambda^f$. Since $\lambda^T=\lambda^q+T\lambda$ and $\nu_{Q_\infty}(T)=q^n(q-1)$, we have
\begin{eqnarray*}
\nu_{Q_\infty}(\sigma(\lambda)-\lambda) &= &\nu_{Q_\infty}(\lam^f-\lam) \\&=& \nu_{Q_\infty}(a_{n}\lam^{T^{n}}+a_{n-1}\lam^{T^{n-1}}+\cdots+a_1\lam^T+(a_0-1)\lam)\\ &=& \begin{cases} 1, & \text{ if } a_0\neq 1\\ q,   & \text{ if } a_1\neq 0, a_0=1\\ \cdots\\ q^{n}, & \text{ if } a_{n}\neq 0, a_{n-1}=\cdots=a_1=0, a_0=1. \end{cases}
\end{eqnarray*}
Hence, the orders of higher ramification groups $g_i=|G_i(Q_\infty|\infty)|$ can be determined explicitly as follows:
$g_0=q^{n}(n-1)$, $g_1=g_2=\cdots=g_{q-1}=q^{n}$, $g_q=\cdots=g_{q^2-1}=q^{n-1}, \cdots, g_{q^{n-1}}=\cdots=g_{q^{n}-1}=q$ and $g_{q^{n}}=1$.
Hence, the conductor of $\infty$ in $K_n/K$ is
\[ c_\infty(K_n/K)=\frac{d(Q_\infty|\infty)+a(Q_\infty|\infty)} {e(Q_\infty|\infty)}=\frac{(n+1)q^{n}(q-1)-q^{n}+q^{n}}{q^{n}(q-1)}=n+1.  \]
Thus, the first part follows from Lemma \ref{lem: 3.3}(iv) and  Lemma \ref{lem: 2.1}.

Let $F$ be the subfield of $K_n$ fixed by $\F_q^*$. From Galois theory, $K_n/F$ is a finite extension of degree $q-1$ and $[F:K]=q^n$. It follows that $P_\a$ is tamely ramified in $K_n/F$ with ramification index $q-1$ and $P_\a$ is unramified in $F/K$. From Lemma \ref{lem: 2.1}, the conductor exponent of $P_\a$ in $F/K$ is $c_{P_\a}(F/K)=0$.
Let $P_{\infty}$ be the restriction of $Q_{\infty}$ in $F$.
The place $\infty$ is totally ramified in $K_n/K$ with ramification index $(q-1)q^n$ and $P_{\infty}$ is tamely ramified in $K_n/F$.  From Lemma \ref{lem: 2.1}, we have $c_{P_\infty}(K_n/F)=1$. Furthermore, from Lemma \ref{lem: 2.3}, we have $$c_\infty(F/K)\le c_{\infty}(K_n/K)=n+1\le \max\{ c_\infty(F/K), c_{P_\infty}(K_n/F)\}.$$
It follows that $c_\infty(F/K)=n+1$. Hence, we have $\text{Cond}(F/K)=(n+1)\cdot \infty.$
\end{proof}

Now we can obtain the main result of this section.
\begin{theorem}\label{thm: 3.5}
Let $f(x)$ be a polynomial in $\F_q[x]$ of degree $r$. Let $E_f=\F_q(x,y)$ be the function field of an Artin-Schreier curve defined by $y^p-y=f(x)$.
Assume that there are at least one rational place $P_\a \in \mathbb{P}_K$ such that $P_\a$ splits completely in $E_f/K$.
Then the Artin-Schreier function field $E_f$ is a subfield of the cyclotomic function field $K_{t}$ for any integer $t\ge r$.
\end{theorem}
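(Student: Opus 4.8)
The plan is to realize both $E_f$ and the relevant subfield of $K_t$ as ray class fields attached to the single place $S=\{P_\a\}$, and then to compare their conductors via the Conductor Theorem. Throughout I fix $S=\{P_\a\}$, where $P_\a$ is the rational place splitting completely in $E_f/K$ by hypothesis; recall that the substitution $T=(x-\a)^{-1}$ turns $P_\a$ into the pole of $T$, which is exactly the place treated in Lemma \ref{lem: 3.3}(iii).

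The first and main step is to identify the subfield $F_t$ of $K_t=K(\Lambda_{T^{t+1}})$ fixed by $\F_q^*$ with the ray class field $K_S^{(t+1)\infty}$. By Proposition \ref{prop: 3.4}, $\text{Cond}(F_t/K)=(t+1)\infty$. I then need that $P_\a$ splits completely (not merely unramified) in $F_t/K$: by Lemma \ref{lem: 3.3}(iii) the place $P_\a$ splits into $q^t$ places of $K_t$, each of ramification index $q-1$; since $P_\a$ is unramified in $F_t/K$ while $[K_t:F_t]=q-1$, the entire ramification index $q-1$ must be carried by $K_t/F_t$, so each of the $q^t$ places of $K_t$ over $P_\a$ is totally ramified over its restriction to $F_t$. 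This forces exactly $q^t=[F_t:K]$ distinct places of $F_t$ above $P_\a$, i.e. complete splitting. By the maximality description following Lemma \ref{lem: 2.2}, $F_t\subseteq K_S^{(t+1)\infty}$, and a degree count closes the identification: Lemma \ref{prop: 2.3} gives $[K_S^{(t+1)\infty}:K]=\frac{1}{q-1}\,h_K\,\deg(P_\a)\,\phi((t+1)\infty)=\frac{1}{q-1}\cdot 1\cdot 1\cdot (q-1)q^t=q^t=[F_t:K]$, using $h_K=1$ for the rational function field. Hence $F_t=K_S^{(t+1)\infty}$.

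Granting this identification, the theorem follows immediately. By Proposition \ref{prop: 3.2} the conductor of $E_f/K$ is $(r+1)\infty$, and by hypothesis $P_\a$ splits completely in $E_f/K$. For $t\ge r$ we have $(r+1)\infty\le (t+1)\infty$, so $E_f$ is an abelian extension of $K$ whose conductor is bounded by $(t+1)\infty$ and in which every place of $S$ splits completely; by the maximality of $K_S^{(t+1)\infty}$ among such extensions (the remark after Lemma \ref{lem: 2.2}), we conclude $E_f\subseteq K_S^{(t+1)\infty}=F_t\subseteq K_t$.

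I expect the technical heart of the argument to be the first step: pinning down that $P_\a$ splits completely in $F_t/K$ rather than merely being unramified, and matching the ray class degree from Lemma \ref{prop: 2.3} to force the equality $F_t=K_S^{(t+1)\infty}$. This is precisely the bridge between the cyclotomic picture of Lemma \ref{lem: 3.3} and the class field theory language of the Conductor Theorem. Once that bridge is in place, comparing the conductors $(r+1)\infty$ and $(t+1)\infty$ is routine and the Conductor Theorem delivers the inclusion essentially for free.
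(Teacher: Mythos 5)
Your proposal is correct and takes essentially the same approach as the paper: identify the fixed field of $\F_q^*$ in the cyclotomic function field with the ray class field $K_{P_\a}^{(\cdot)\infty}$ by combining Proposition \ref{prop: 3.4}, the degree formula of Lemma \ref{prop: 2.3}, and the Conductor Theorem, and then embed $E_f$ into this ray class field using Proposition \ref{prop: 3.2} and the Conductor Theorem once more. The only differences are cosmetic: you carry out the identification at level $t$ directly (the paper does it at level $r$ and then uses $K_r\subseteq K_t$), and you spell out the verification that $P_\a$ splits completely, rather than merely being unramified, in the fixed field.
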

\begin{proof}
From Lemma \ref{prop: 2.3}, we have $[K_{P_\a}^{(r+1)\infty}:K]=q^r.$ From Proposition \ref{prop: 3.4}, the place $P_\a$ is splitting completely in $F/K$ and $\text{Cond}(F/K)=(n+1)\cdot \infty.$ From Lemma \ref{lem: 2.2}, we have $F\subseteq K_{P_\a}^{(r+1)\infty}$. Together with the fact $[F:K]=q^r$, we have $K_{P_\a}^{(r+1)\infty}=F,$ i.e, the ray class field $K_{P_\a}^{(r+1)\infty}$ is the subfield $F$ of $K_r$ fixed by $\F_q^*$.

As we know $P_\Ga$ splits completely in $E_f/K$ and Cond$(E_f/K)=(r+1)\infty$ from Proposition \ref{prop: 3.2}, it follows that $E_f$ is a subfield of $K_{P_\a}^{(r+1)\infty}=F$ from Lemma \ref{lem: 2.2}. Hence $E_f$ is a subfield of $K_r$. Since $K_r$ is a subfield of $K_t$ for any $t\ge r$ from the definition of cyclotomic function fields, $E_f$ is a subfield of $K_t$ as well.
\end{proof}

From Lemma \ref{lem: 3.3}, the cyclotomic function field $K_r$ is an abelian extension of $K$ with Galois group ${\rm Gal}(K_r/K) \cong (\mathbb{F}_q[T]/(T^{r+1}))^*.$
It follows that $K_r/E_f$ is a Galois extension with Galois group $\Gal(K_r/E_f)$ being a subgroup of $\Gal(K_r/K)$.
From Galois theory, the Artin-Schreier function field $E_f$ is the subfield of $K_r$ fixed  by $\Gal(K_r/E_f)$.
Hence, we can estimate the number of rational places of Artin-Schreier curves by studying fixed subfields of subgroups of $\Gal(K_r/K)\cong \Aut(K_r/\F_q)$ with index $p$, which is very similar to the method of systematically constructing maximal function fields via automorphism groups of Hermitian function fields \cite{BMXY13,GSX00,MX19}.

\section{An upper bound of the number of rational places of Artin-Schreier curves}\label{sec: 4}

Let $q=p^m$ be a prime power and let $\F_q$ be the finite field with $q$ elements, i.e.,  $\F_q=\{0,\a_1,\a_2,\cdots,\a_{q-1}\}$.
Let $\mathcal{G}=\Gal(K_r/K)/\F_q^* \cong (\mathbb{F}_q[T]/(T^{r+1}))^*/\F_q^*$.
The set of $\mathcal{G}$ consists of elements: $$\mathcal{G} \cong (\mathbb{F}_q[T]/(T^{r+1}))^*/\F_q^*=\{\F_q^*(1+a_0T+\cdots+a_rT^r+(T^{r+1})): a_i\in \F_q \text{ for } 1\le i\le r\}.$$
We can identify $1+a_0T+\cdots+a_rT^r$ with the equivalence class $\F_q^*(1+a_0T+\cdots+a_rT^r+(T^{r+1}))$.
For simplicity, we assume that $r<p$. The order of any non-identity element in  $(\mathbb{F}_q[T]/(T^{r+1}))^*/\F_q^*$ is $p$, since
$$(1+a_1T+a_2T^2+\cdots+a_rT^r)^p=1+a_1^pT^p+a_2^pT^{2p}+\cdots+a_r^pT^{rp}\equiv 1 (\text{mod } T^{r+1}).$$
Hence, $\mathcal{G}$ can be viewed as a vector space over $\F_p$, i.e., $\mathcal{G} \cong (\mathbb{F}_q[T]/(T^{r+1}))^*/\F_q^*\cong \F_p^{rm}$.

In order to estimate the number of rational places of the Artin-Scherier function field $E_f$, we need to study the number of rational places $P$ of $K$ such that its Artin symbol $[\frac{K_r/K}{P}]$ is contained in $\Gal(K_r/E_f)$.
Let $G$ be any subgroup of $\Gal(K_r/K) \cong (\mathbb{F}_q[T]/(T^{r+1}))^*$ with index $[\Gal(K_r/K): G]=p$.
Now we consider the rational places of $K$ corresponding to linear polynomials $1+\b T$ for $\b\in \F_q^*$, which are equivalent to rational places with respect to $T+\b^{-1}$ in $\F_q(T)$ or $x-(\Ga-\Gb)$ in $\F_q(x)$. We need to estimate the size of the set $\{\b\in \F_q^*: 1+\b T\in G\}$.

We view $1+\a_iT$ as a column vector of dimension $rm$ in $\mathcal{G}$. Let $A=(1+\a_1T, 1+\a_2T, \cdots, 1+\a_{q-1}T)$ be the matrix generated by column vectors $1+\a_iT$ for $1\le i\le q-1$. We need to find the smallest $d$ such that any $n-d+1$ columns of $A$ form a submatrix of rank $rm$.
Assume that $\{1+\a_iT\}_{i\in I}$ with $I\subseteq [q-1]:=\{1,2,\cdots,q-1\}$ form a submatrix of rank $rm$. Then we have the following result.

\begin{lemma}\label{lem: 4.1}
The space spanned by
$\{1+\a_iT\}_{i\in I}$ with $I=\{i_1,i_2,\cdots,i_t\}\subseteq [q-1]$ has $\F_p$-dimension $rm$ if and only if $G_I:=\left(\begin{array}{cccc}\a_{i_1} & \a_{i_2} & \cdots &\a_{i_t}\\ \a_{i_1}^2 & \a_{i_2}^2  & \cdots &\a_{i_t}^2\\ \vdots &  \vdots  & \ddots &  \vdots \\ \a_{i_1}^r  & \a_{i_2}^r  & \cdots &\a_{i_t}^r\end{array}\right)\in \F_p^{rm\times (q-1)}$ has the full rank $rm$, 
here $\Ga_i^k$ is viewed as a column vector under a basis of $\F_q$ over $\F_p$ for $1\le k\le r$ and $i\in I$.
\end{lemma}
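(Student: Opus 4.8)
The plan is to make the $\F_p$-vector space structure on $\mathcal{G}$ completely explicit via a truncated logarithm, so that the abstract spanning condition for $\{1+\a_iT\}_{i\in I}$ turns into a concrete rank condition that differs from $G_I$ only by harmless row-scalings.

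First I would identify $\mathcal{G}\cong(\F_q[T]/(T^{r+1}))^*/\F_q^*$ with the group of one-units $U=1+T\F_q[T]/(T^{r+1})$, equipped with the $\F_p$-vector space structure in which addition is multiplication in $U$ and $c\cdot v=v^c$ for $c\in\F_p$. Since $r<p$, the integers $1,2,\dots,r$ are invertible in $\F_q$, so the truncated series
$$\log(1+u)=\sum_{k=1}^{r}\frac{(-1)^{k-1}}{k}\,u^k,\qquad u\in T\F_q[T]/(T^{r+1}),$$
is well defined (it terminates because $u^{r+1}\equiv 0$). The formal identity $\log\bigl((1+u)(1+v)\bigr)=\log(1+u)+\log(1+v)$ shows that $\log$ is a group homomorphism from $U$ onto the additive group $N:=T\F_q[T]/(T^{r+1})$; as $\exp$ provides a two-sided inverse (again using $r<p$), it is an isomorphism, and being a homomorphism of elementary abelian $p$-groups it is automatically $\F_p$-linear.

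Next I would compute the image of a generator: taking $u=\a T$ gives
$$\log(1+\a T)=\sum_{k=1}^{r}\frac{(-1)^{k-1}}{k}\,\a^k\,T^k.$$
Fixing an $\F_p$-basis of $\F_q$, I identify $N$ with $\F_q^r\cong\F_p^{rm}$ by reading off the coefficients of $T,T^2,\dots,T^r$; under this identification the column attached to $1+\a_iT$ is obtained by stacking the $m$-dimensional $\F_p$-coordinate vectors of $\frac{(-1)^{k-1}}{k}\a_i^k$ for $k=1,\dots,r$. Because $\log$ is an $\F_p$-isomorphism, $\{1+\a_iT\}_{i\in I}$ spans $\mathcal{G}$ over $\F_p$ if and only if the resulting log-coordinate matrix $L_I$ has full rank $rm$. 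Finally I would observe that $L_I$ is obtained from $G_I$ by multiplying the $k$-th block of $m$ rows by the scalar $\frac{(-1)^{k-1}}{k}$; since $1\le k\le r<p$, each such scalar lies in $\F_p^{*}$, so $\mathrm{rank}_{\F_p}L_I=\mathrm{rank}_{\F_p}G_I$, and the desired equivalence follows.

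The step requiring the most care — and the only genuine obstacle — is verifying that the truncated logarithm is a well-defined $\F_p$-linear isomorphism: this is exactly where the hypothesis $r<p$ enters, both to invert $1,\dots,r$ and to guarantee that $U$ has exponent $p$. The remaining point to watch is that the scaling constants $\frac{(-1)^{k-1}}{k}$ lie in the prime field $\F_p$, so that row-scaling commutes with the (only $\F_p$-linear) coordinate expansion $\F_q\to\F_p^m$; this is what keeps the rank of $G_I$ unchanged.
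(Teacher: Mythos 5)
Your proof is correct, and it runs on the same engine as the paper's argument --- the truncated logarithm, available exactly because $r<p$ makes $1,\dots,r$ invertible and forces $\mathcal{G}$ to have exponent $p$ --- but you deploy that engine in a genuinely different way. The paper never establishes $\log$ as an isomorphism: it takes an arbitrary target $1+\sum_{k=1}^r b_kT^k$, expresses the spanning condition as solvability of $\prod_{i\in I}(1+\alpha_iT)^{u_i}\equiv 1+\sum_{k=1}^r b_kT^k \pmod{T^{r+1}}$ in the exponents $u_i$, applies $\log$ to both sides, and compares coefficients of $T^j$ to obtain the linear system $G_I\mathbf{x}=\mathbf{b}$, where $\mathbf{b}=\bigl(b_1,\ b_1^2-2b_2,\ \dots,\ (-1)^{r-1}rb_r+f_r(b_1,\dots,b_{r-1})\bigr)$ is a nonlinear function of $(b_1,\dots,b_r)$; full rank of $G_I$ is then identified with solvability for every such $\mathbf{b}$. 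Your route --- prove that the truncated $\log\colon 1+T\F_q[T]/(T^{r+1})\to T\F_q[T]/(T^{r+1})$ is an $\F_p$-linear isomorphism, observe that the log-coordinates of $1+\alpha_iT$ are obtained from the columns of $G_I$ by scaling the $k$-th block of $m$ rows by the unit $(-1)^{k-1}/k\in\F_p^*$ --- buys two things: the $\F_p$-linearity is structural rather than buried in coefficient bookkeeping, and you bypass the one delicate step in the paper's proof, namely the implicit claim that as $(b_1,\dots,b_r)$ runs over $\F_q^r$ the vectors $\mathbf{b}$ fill out all of $\F_q^r$ (true because the change of variables is triangular with invertible diagonal coefficients $(-1)^{j-1}j$, but the paper does not say this). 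What the paper's computation buys in exchange is the explicit recursion for the polynomials $f_j$, which it records in the remark following the lemma. If you write your version up in full, the one step to make rigorous is the homomorphism property of the truncated logarithm: the formal identity $\log\bigl((1+X)(1+Y)\bigr)=\log(1+X)+\log(1+Y)$ in $\mathbb{Q}[[X,Y]]$, restricted to total degree at most $r$, has coefficients whose denominators involve only $1,\dots,r$, hence reduces modulo $p$ and may be evaluated at elements of the nilpotent ideal $T\F_q[T]/(T^{r+1})$, where all monomials of degree exceeding $r$ vanish.
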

\begin{proof}
Assume that $\{1+\a_iT\}_{i\in I}$ with $I=\{i_1,i_2,\cdots,i_t\}\subseteq [q-1]$ generates the vector space $\mathcal{G}$ over $\F_p$.
For any $1+\sum_{k=1}^r b_kT^k+(T^{r+1})\in \mathcal{G}$, there exist elements $u_i$ for $i\in I$ such that
$$1+\sum_{k=1}^r b_kT^k\equiv \prod_{i\in I}(1+\a_iT)^{u_i} (\text{mod } T^{r+1}),$$
i.e., $\prod_{i\in I}(1+\a_iT)^{u_i} =1+\sum_{k=1}^r b_kT^k+T^{r+1}h(T)$ for some $h(T)\in \F_q[T]$.
Taking the logarithm and using the Taylor expansion of log function, we have
\begin{eqnarray*}
\ln \left(1+\sum_{k=1}^r b_kT^k+T^{r+1}h(T)\right)&=&\sum_{i\in I} u_i\ln (1+\a_iT)=\sum_{i\in I} u_i \sum_{j=1}^{\infty} \frac{(-1)^{j-1}\a_i^j}{j}T^j\\ &=&\sum_{j=1}^\infty \frac{(-1)^{j-1}}{j}\left(\sum_{i\in I}u_i\a_i^j\right)T^j.
\end{eqnarray*}
Comparing with the coefficients of $T^j$ at the both sides for $1\le j\le r$, we have
\begin{eqnarray*} \frac{(-1)^{j-1}}{j}\left(\sum_{i\in I}u_i\a_i^j\right)&=&\text{ the coefficient of } T^j \text{ in } \sum_{\ell=1}^n\frac{(-1)^{\ell-1}(b_1T+\cdots+b_rT^r)^\ell}{\ell}\\
&=& \text{ the coefficient of } T^j \text{ in } \sum_{\ell=1}^j\frac{(-1)^{\ell-1}(b_1T+\cdots+b_jT^j)^\ell}{\ell}.\end{eqnarray*}
Hence, we have $$\sum_{i\in I}u_i\a_i^j=(-1)^{j-1}jb_j+f_j(b_1,b_2,\cdots,b_{j-1}), $$
for some $f_j\in \F_q[X_1,X_2,\cdots,X_{j-1}]$. That is to say, the system of linear equations
$$\left(\begin{array}{cccc}\a_{i_1}& \a_{i_2}& \cdots & \a_{i_t} \\ \a_{i_1}^2 & \a_{i_2}^2 & \cdots & \a_{i_t} ^2 \\ \vdots & \vdots & \cdots & \vdots \\ \a_{i_1}^r & \a_{i_2}^r & \cdots & \a_{i_t} ^r\end{array}\right)
\left(\begin{array}{c}x_{i_1}\\ x_{i_2}\\ \vdots \\ x_{i_t}\end{array}\right)=\left(\begin{array}{c}b_{1}\\b_1^2-2b_2\\ \vdots \\ (-1)^{r-1}rb_r+f_r(b_1,b_2,\cdots,b_{r-1})\end{array}\right):={\bf b}$$
has at least one solution $(u_{i_1},u_{i_2},\cdots, u_{i_t})\in \F_p^t$ for every tuple $(b_1,b_2,\cdots,b_r)\in \F_q^r$.

We claim that the rank of $G_I$ must be $rm$. Suppose that the rank of $G_I$ is strictly less than $rm$. When $b_i$ runs through $\F_q^*$ for each $1\le i\le r$, there must exist an tuple $(b_1,b_2,\cdots,b_r)$ such that the rank of $G_I$ is strictly less than the augmented matrix $(G_I, {\bf b})$ in the above system of linear equations. Thus, we obtain a contradiction.

On the other hand, if $G_I$ has rank $rm$, then the above system of linear equations has at least a solution for each tuple $(b_1,b_2,\cdots,b_r)\in \F_q^r$. That is to say any element $1+\sum_{k=1}^r b_kT^k\in \mathcal{G}$ can be generated by $\{1+\a_iT\}_{i\in I}$ with $I=\{i_1,i_2,\cdots,i_t\}\subseteq [q-1]$.
\end{proof}

\begin{remark}
The function $f_j$ in the above proof can be recursively computed. For instance, for $1\le j\le 5$, it is easy to compute that
$$\begin{cases} \sum_{i\in I} u_i \a_i=b_1,\\ \sum_{i\in I} u_i \a_i^2=b_1^2-2b_2,\\ \sum_{i\in I} u_i \a_i^3=b_1^3-3b_1b_2+3b_3,\\  \sum_{i\in I} u_i \a_i^4=b_1^4-4b_1^2b_2+2b_2^2+4b_1b_3-4b_4,\\
  \sum_{i\in I} u_i \a_i^5=b_1^5-5b_1^3b_2+5b_1^2b_3+5b_1b_2^2-5b_1b_4-5b_2b_3+5b_5.\\  \end{cases}$$
\end{remark}

Let $C$ be the code generated by
\begin{equation}\label{eq:xx0}A=\left(\begin{array}{cccc}\a_1& \a_2& \cdots & \a_{q-1} \\ \a_1^2 & \a_2^2 & \cdots & \a_{q-1}^2 \\ \vdots & \vdots & \cdots & \vdots \\ \a_1^r & \a_2^r & \cdots & \a_{q-1}^r\end{array}\right)\in \F_p^{rm\times (q-1)},\end{equation}
where $\a_j^i\in \F_q^*$ is viewed as a column vector of dimension $m$ under a fixed $\F_p$-isomorphism between $\F_q$ and $\F_p^m$ for each $1\le i\le r$ and $1\le j\le q-1$. For given $p^m$ and $r$, the code $C$ is uniquely determined up to equivalence. Hence, the minimum distance $d(C)$ of $C$ is a uniquely determined function of the variables $p, m$ and $r$. We denote by $d(p,m,r)$ the  minimum distance $d(C)$ of $C$.

\begin{theorem}\label{thm: 4.3}
Let $f(x)\in\F_q[x]$ be a polynomial of degree at most $r$, then the number of rational places of Artin-Scherier function field $E_f=\F_q(x,y)$ defined by $y^p-y=f(x)$ is upper bounded by \begin{equation}\label{eq:3.3} N_f\le 1+p(q-d(p,m,r)).\end{equation}
Moreover, there exists a polynomial $h(x)\in\F_q[x]$ of degree at most $r$ such that $N_h= 1+p(q-d(p,m,r)).$
\end{theorem}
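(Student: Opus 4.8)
The plan is to connect the number of rational places of $E_f$ to membership in the index-$p$ subgroup $G=\Gal(K_r/E_f)$ via the Artin symbol, and then to translate this group-theoretic count into a coding-theoretic quantity using Lemma \ref{lem: 4.1} and Lemma \ref{lem:2.3}. First I would handle the degenerate case where $E_f/K$ has no completely split rational place: here $N_f\le p+1$ (only places ramified or inert contribute), and one checks this is consistent with the claimed bound, so I may assume $E_f$ sits inside $K_r$ as in Theorem \ref{thm: 3.5} with $G=\Gal(K_r/E_f)$ a subgroup of $\mathcal{G}\cong\F_p^{rm}$ of index $p$. The rational places of $E_f$ come from: the unique place above $\infty$ (contributing $1$, since $\infty$ is totally ramified by Lemma \ref{lem: 3.1}(d)), and the places $P_{\a-\b}$ of $K$ that split completely in $E_f/K$, each contributing $p$. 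By Lemma \ref{lem: 2.0} and Lemma \ref{lem: 3.3}(v), $P_{\a-\b}$ (corresponding to $1+\b T$) splits completely in $E_f/K$ exactly when $1+\b T\in G$. Hence $N_f=1+p\cdot|\{\b\in\F_q^*:1+\b T\in G\}|$, plus a possible contribution from $P_\a$ itself (the $\b=0$ place), which I would account for separately.

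Next I would extremize over the choice of $G$. Since $G$ has index $p$, it is the kernel of a nonzero $\F_p$-linear functional $\psi:\mathcal{G}\to\F_p$, and $1+\b T\in G$ iff $\psi(1+\b T)=0$. Writing $1+\b T$ in coordinates via the logarithm expansion of Lemma \ref{lem: 4.1}, the value $\psi(1+\b T)$ is precisely an $\F_p$-linear combination of the coordinates $\b,\b^2,\dots,\b^r$ of the $\b$-th column of the matrix $A$ in \eqref{eq:xx0}. Thus the vector $\big(\psi(1+\a_1T),\dots,\psi(1+\a_{q-1}T)\big)$ is exactly a nonzero codeword of the code $C$ generated by $A$, and $|\{\b:1+\b T\in G\}|$ is the number of its zero coordinates, i.e. $(q-1)-\mathrm{wt}(\psi)$. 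Every index-$p$ subgroup $G$ arises this way, so maximizing $N_f$ over all admissible $G$ amounts to minimizing the Hamming weight over nonzero codewords, which is $d(C)=d(p,m,r)$. This yields
\[
N_f\le 1+p\big((q-1)-d(p,m,r)\big)+(\text{correction for }P_\a),
\]
and I would verify the correction term fills the gap to give exactly $1+p(q-d(p,m,r))$: the place $P_\a$ also splits completely (it corresponds to $1\in G$, always in $G$), contributing the missing $p$ so that the count runs over all of $\F_q$ rather than $\F_q^*$.

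For the tightness claim, I would reverse the construction: take a minimum-weight codeword of $C$, which corresponds to a functional $\psi$ whose kernel $G$ contains $1+\b T$ for exactly $q-d(p,m,r)$ values of $\b\in\F_q$ (including $\b=0$). The fixed field $E$ of $G$ is a degree-$p$ abelian extension of $K$ with conductor dividing $(r+1)\infty$ and with $\infty$ totally ramified; by Lemma \ref{lem: 3.1} and the Artin-Schreier correspondence, $E=E_h$ for some $h(x)\in\F_q[x]$ of degree at most $r$ with $\gcd(\deg h,p)=1$, and this $h$ achieves equality. The main obstacle I anticipate is bookkeeping in this last step: one must show that an arbitrary index-$p$ subgroup $G\le\mathcal{G}$ with the correct conductor actually corresponds to an \emph{Artin-Schreier} field of the prescribed form (i.e. that the ray-class/cyclotomic machinery of Section \ref{sec: 3} can be run in reverse to produce a genuine polynomial $h$), and to confirm that the logarithm-coordinate identification of $\psi$ with a codeword of $A$ is an isomorphism of $\F_p$-vector spaces rather than merely a surjection—this is essentially the content of Lemma \ref{lem: 4.1} and must be invoked carefully to ensure no weight is lost or double-counted.
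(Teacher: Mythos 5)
Your proposal is correct, and its skeleton is the same as the paper's: embed $E_f$ in $K_r$ via Theorem \ref{thm: 3.5}, use the Artin symbol to identify the completely split rational places of $K$ with the elements $1+\beta T$ lying in the index-$p$ subgroup $G$, convert this count into a statement about the code $C$ generated by the matrix \eqref{eq:xx0}, and reverse the construction (extremal object $\to$ subgroup $\to$ fixed field $\to$ polynomial $h$ of degree at most $r$) for the equality claim. Where you genuinely differ is the middle step: the paper goes through Lemma \ref{lem: 4.1} together with the rank characterization of minimum distance (Lemma \ref{lem:2.3}) applied to the columns of $A$ indexed by the split places, whereas you use the dual picture, writing $G=\ker\psi$ for a nonzero $\F_p$-functional $\psi$ and observing that $\bigl(\psi(1+\alpha_1T),\dots,\psi(1+\alpha_{q-1}T)\bigr)$ is a codeword of $C$ whose zero coordinates are exactly the split places in $\F_q^*$. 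This bypasses Lemma \ref{lem:2.3} entirely and makes the count $N_f=1+p\bigl(1+(q-1)-\mathrm{wt}\bigr)$ immediate, which is arguably cleaner. However, both routes lean on the same fact, which you flag but do not close: that $A$ has full rank $rm$, equivalently that a nonzero $\psi$ yields a nonzero codeword (the paper needs this just as much, since it applies Lemma \ref{lem:2.3} with $k=rm$, and Lemma \ref{lem: 4.1} is only an equivalence, so it cannot supply the fact by itself). It can be closed in one stroke: a codeword has the form $\bigl(\Tr(\sum_{k=1}^r b_k\alpha^k)\bigr)_{\alpha\in\F_q^*}$, and if it vanished, then the polynomial $\Tr\bigl(\sum_{k=1}^r b_kx^k\bigr)=\sum_{i=0}^{m-1}\sum_{k=1}^r b_k^{p^i}x^{kp^i}$, which also vanishes at $x=0$, would have $q$ roots; but its exponents $kp^i$ (with $1\le k\le r<p$) are pairwise distinct and bounded by $rp^{m-1}<q$, so it would have to be the zero polynomial, forcing all $b_k=0$. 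Two minor remarks: in your degenerate case the correct conclusion is $N_f=1$ (in a degree-$p$ Galois extension an unramified rational place is either completely split or inert, and inert places contribute no rational places), which is even more harmless than your stated $N_f\le p+1$; and your conductor argument yielding $\deg h\le r$ in the tightness step is spelled out more carefully than in the paper, which only argues that the denominator $u(x)$ equals $1$.
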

\begin{proof}
Assume that there are at least one rational place $P_\a \in \mathbb{P}_K$ such that $P_\a$ splits completely in $E_f/K$.
From Theorem \ref{thm: 3.5}, $E_f$ is the subfield of $K_r$ fixed by  some subgroup $G$ of $\Gal(K_r/K)$. As $[E_f:K]=p$, the index $[\Gal(K_r/K):G]$ is equal to $p$. A rational place corresponding to $T+\Gb^{-1}$ in $\F_q(T)$ or $P_{\Ga-\Gb}$ in $\F_q(x)$ splits completely in $E_f/K$, i.e., $y^p-y=f(\Ga-\Gb)$ has $p$ solutions for $\Gb\in \F_q^*$, if and only if  $1+\Gb T$ belong to $G$.  Suppose that there are exactly $t$ rational places of $K$ that split completely in $E_f$, i.e., $N_f=1+pt$, then  there exist pairwise distinct elements $\{\Ga_1,\dots,\Ga_{t-1}\}$ of $\F_q^*$ such that the space spanned by $\{1+\Ga_1 T,\dots, 1+\Ga_{t-1} T\}$ has dimension at most $rm-1$. This implies that $t-1\le (q-1)-d(p,m,r)$ by Lemma \ref{lem:2.3}. This proves the inequality \eqref{eq:3.3}.

By Lemma \ref{lem:2.3} again, there exists a subset $I\subseteq[q-1]$ of size $|I|=q-1-d(p,m,r)$ such that the space spanned by $\{1+\Ga_i T\}_{i\in I}$ has dimension at most $rm-1$. Let $G$ be an $\F_p$-subspace of dimension $mr-1$ that contains $\{1+\Ga_i T\}_{i\in I}$ and let $L$ be the subfield of $F$ fixed by $G$, where $F$ is the subfield of $K_r$ fixed by $\F_q^*$. Then $L/K$ is a Galois extension of degree $p$. Thus, we must have that $L=\F_q(x,y)$ with $y^p-y=h(x)/u(x)$, where $h(x),u(x)\in\F_q[x]$ with $\gcd(h(x),u(x))=1$. As the pole of $x$ is the unique place that ramifies in $L/K$, we must have $u(x)=1$. The completes the proof.
\end{proof}

\begin{theorem}\label{thm: 4.4}
Let $d(p,m,r)$ be the minimum distance $d(C)$ of $C$ defined as above. Our bound \eqref{eq:3.3} given in Theorem \ref{thm: 4.3} is upper bounded by the Serre bound for the number of rational places of Artin-Schreier curves, i.e., 
$$ 1+p(q-d(p,m,r)) \le 1+q+\frac{(r-1)(p-1)}{2}\lfloor 2\sqrt{q}\rfloor$$
\end{theorem}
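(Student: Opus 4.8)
The plan is to exploit the sharpness statement built into Theorem \ref{thm: 4.3}, namely that the quantity $1+p(q-d(p,m,r))$ is not merely an upper bound for $N_f$ but is in fact \emph{attained} by a concrete curve. The Serre bound \eqref{eq:1.4} holds for \emph{every} Artin-Schreier curve $y^p-y=f(x)$ of degree at most $r$; in particular it must hold for the extremal curve. Since that curve realizes our bound exactly, comparing the two inequalities immediately forces our bound to lie below the Serre bound. In other words, the theorem is a formal consequence of sharpness plus the universality of the Serre bound, and requires no further analysis of the code $C$ or of the function $d(p,m,r)$ itself.

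Concretely, I would carry out three steps. First, I would invoke the ``moreover'' part of Theorem \ref{thm: 4.3} to produce a polynomial $h(x)\in\F_q[x]$ with $\deg h\le r$ whose Artin-Schreier function field $E_h=\F_q(x,y)$, defined by $y^p-y=h(x)$, satisfies the equality $N_h=1+p(q-d(p,m,r))$. Second, I would apply the Serre bound to this \emph{particular} curve $E_h$: writing $r'=\deg h$, the genus formula of Lemma \ref{lem: 3.1}(d) gives $g(E_h)=\tfrac{(p-1)(r'-1)}{2}$, so that the general Serre bound \eqref{eq:1.3} yields $N_h\le q+1+\tfrac{(p-1)(r'-1)}{2}\lfloor 2\sqrt{q}\rfloor$. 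Third, I would substitute the exact value $N_h=1+p(q-d(p,m,r))$ on the left-hand side and use $r'\le r$ on the right-hand side to arrive at the claimed inequality.

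The only point requiring care -- and thus the main (if minor) obstacle -- is verifying that the genus formula of Lemma \ref{lem: 3.1}(d) legitimately applies to $E_h$ and that replacing $r'$ by $r$ can only weaken the right-hand side. For the genus formula I need $\gcd(r',p)=1$; this holds automatically because the construction in the proof of Theorem \ref{thm: 4.3} forces the pole of $x$ to be the unique ramified place of $E_h/K$, so $h$ is nonconstant, giving $1\le r'\le r<p$, and a prime $p$ is coprime to every integer in the range $[1,p-1]$. The monotonicity step is immediate, since $\tfrac{(p-1)(r'-1)}{2}\le\tfrac{(p-1)(r-1)}{2}$ and $\lfloor 2\sqrt{q}\rfloor\ge 0$. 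Hence the inequality $1+p(q-d(p,m,r))\le 1+q+\tfrac{(r-1)(p-1)}{2}\lfloor 2\sqrt{q}\rfloor$ follows, and the proof is complete.
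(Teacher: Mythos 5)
Your proposal is correct and is essentially identical to the paper's own proof: both invoke the ``moreover'' (sharpness) part of Theorem \ref{thm: 4.3} to produce an extremal polynomial $h$ with $N_h=1+p(q-d(p,m,r))$, then apply the Serre bound to that particular curve and use $\deg h\le r$ to weaken the right-hand side. Your extra check that $\gcd(\deg h,p)=1$ (so the genus formula of Lemma \ref{lem: 3.1}(d) applies) is a detail the paper glosses over, but it does not change the argument.
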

\begin{proof}
From Theorem \ref{thm: 4.3}, there exists a polynomial $h(x)\in\F_q[x]$ of degree at most $r$ such that $N_h= 1+p(q-d(p,m,r)).$
From the Serre bound, the number of rational places of Artin-Schreier curve $y^p-y=h(x)$ is upper bounded $$N_h\le 1+q+\frac{(\deg h(x)-1)(p-1)}{2}\lfloor 2\sqrt{q}\rfloor \le 1+q+\frac{(r-1)(p-1)}{2}\lfloor 2\sqrt{q}\rfloor.$$
 This completes the proof.
\end{proof}

\begin{remark}
By making use of the Serre bound, we derive a lower bound on the minimum distance of the linear code generated by the matrix of \eqref{eq:xx0}, that is, 
 $$ d(p,m,r)\ge q-p^{m-1}-\frac{(r-1)(p-1)}{2p}\lfloor 2\sqrt{q}\rfloor.$$
 If one can provide a good lower bound on the minimum distance $d(p,m,r)$, then the Serre bound for the Artin-Schreier curves defined by $y^p-y=f(x)$ can be improved. 
\end{remark}

\section{Bounds and examples}\label{sec: 5}
In Section \ref{sec: 4}, we have shown that our bound \eqref{eq:3.3} given in Theorem \ref{thm: 4.3} is upper bounded by the Serre bound for the number of rational places of an Artin-Schreier curve $E_f$, i.e.,
$$N_f\le 1+p(q-d(p,m,r)) \le 1+q+\frac{(r-1)(p-1)}{2}\lfloor 2\sqrt{q}\rfloor.$$
In order to obtain a tight upper bound on the number of rational places of Artin-Schreier  curve $E_f=\F_q(x,y)$ defined by $y^p-y=f(x)$, it remains to determine the minimum distance $d(p,m,r)$ of $C$ which is generated by the matrix
$$A=\left(\begin{array}{cccc}\a_1& \a_2& \cdots & \a_{q-1} \\ \a_1^2 & \a_2^2 & \cdots & \a_{q-1}^2 \\ \vdots & \vdots & \cdots & \vdots \\ \a_1^r & \a_2^r & \cdots & \a_{q-1}^r\end{array}\right)\in \F_p^{rm\times (q-1)},$$
where $\a_j^i\in \F_q^*$ is viewed as a column vector of dimension $m$ under a fixed $\F_p$-isomorphism between $\F_q$ and $\F_p^m$ for each $1\le i\le r$ and $1\le j\le q-1$.
However, the function of minimum distances $d(p,m,r)$ hasn't been determined explicitly.
In this section, we will determine the minimum distance $d(p,m,r)$ for $r=2$ and provide many examples of minimum distance $d(p,m,r)$ for $r\ge 3$ with the help of the software Magma.

\subsection{Bounds for $\deg f(x)=2$}
In this subsection, we will determine the minimum distance of the code $C$ defined by $$\left(\begin{array}{cccc}\a_{1} & \a_{2} & \cdots & \a_{q-1} \\ \a_{1}^2 & \a_{2}^2 & \cdots & \a_{q-1}^2 \end{array}\right),$$
where each $\a_j^i$ is viewed as a column vector under a fixed $\F_p$-basis $\{\r_1,\cdots,\r_m\}$ of $\F_q$ for $1\le i\le 2$ and $1\le j\le q-1$.

\begin{lemma}\label{lem: 5.1}
Let $C$ be a code defined by the above matrix. Then we have
\[C=\{(\text{Tr}(ax^2+bx))_{x\in \F_q^*}|\; a,b\in \F_q\}.\]
\end{lemma}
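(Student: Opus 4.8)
The plan is to show both containments between the code $C$ and the trace-image set by exhibiting the explicit $\F_p$-linear correspondence between codewords and the functions $\Tr(ax^2+bx)$ evaluated on $\F_q^*$. The code $C$ is the $\F_p$-row space of the matrix whose columns are the coordinate vectors of $(\a_j,\a_j^2)^{\mathsf T}$; the key observation is that each row of the generator matrix, when read across the columns indexed by $x\in\F_q^*$, is a coordinate function of $x$ followed by a coordinate function of $x^2$. So I first fix the $\F_p$-basis $\{\r_1,\dots,\r_m\}$ of $\F_q$ and let $\{\r_1^*,\dots,\r_m^*\}$ be the dual basis with respect to the nondegenerate bilinear form $(u,v)\mapsto\Tr(uv)$, so that the $k$-th coordinate of $u\in\F_q$ in the basis $\{\r_i\}$ equals $\Tr(\r_k^* u)$.

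With this setup, the $2m$ rows of the matrix split into two blocks. The first $m$ rows record, for each column $x$, the coordinates of $x$; the $k$-th such row is $(\Tr(\r_k^* x))_{x\in\F_q^*}$. The next $m$ rows record the coordinates of $x^2$; the $k$-th such row is $(\Tr(\r_k^* x^2))_{x\in\F_q^*}$. Hence an arbitrary $\F_p$-linear combination of the rows, with coefficients $c_1,\dots,c_m$ on the first block and $d_1,\dots,d_m$ on the second, produces the codeword
\[
\left(\Tr\Big(\big(\textstyle\sum_k c_k\r_k^*\big)x\Big)+\Tr\Big(\big(\textstyle\sum_k d_k\r_k^*\big)x^2\Big)\right)_{x\in\F_q^*}
=\big(\Tr(ax^2+bx)\big)_{x\in\F_q^*},
\]
where $b=\sum_k c_k\r_k^*$ and $a=\sum_k d_k\r_k^*$. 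Here I use $\F_p$-linearity of $\Tr$ to pull the scalars $c_k,d_k\in\F_p$ inside the trace. As the $c_k,d_k$ range over $\F_p$, the elements $b$ and $a$ range over the full $\F_p$-span of the dual basis, which is all of $\F_q$. This shows $C\subseteq\{(\Tr(ax^2+bx))_{x\in\F_q^*}: a,b\in\F_q\}$, and running the argument backward — given $a,b\in\F_q$, expand them in the dual basis to recover the coefficients $d_k,c_k$ — gives the reverse inclusion, completing the proof.

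The only mild subtlety, and the one place deserving care, is the bookkeeping of the dual basis: I must check that the $k$-th coordinate map $u\mapsto u_k$ (where $u=\sum_i u_i\r_i$) is indeed of the form $u\mapsto\Tr(\r_k^* u)$ for a suitable $\r_k^*\in\F_q$, which is exactly the statement that the trace form is nondegenerate on $\F_q/\F_p$ and hence admits a dual basis. Once this is in hand, everything is a direct translation and the two inclusions are symmetric. I do not expect a genuine obstacle here; the lemma is essentially an unwinding of the definition of the generator matrix combined with the standard identification of $\F_q$-coordinates with trace functionals.
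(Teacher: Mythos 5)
Your proof is correct and follows essentially the same route as the paper: the paper likewise introduces the dual basis $\{\b_1,\dots,\b_m\}$ of $\{\r_1,\dots,\r_m\}$ with respect to the trace form, identifies each row of the generator matrix with the trace functional $x\mapsto \Tr(\b_i x)$ (resp.\ $x\mapsto \Tr(\b_i x^2)$), and concludes that the $\F_p$-row space is exactly $\{(\Tr(ax^2+bx))_{x\in\F_q^*}: a,b\in\F_q\}$. Your write-up simply makes the linear-combination bookkeeping and the two inclusions more explicit than the paper does.
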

\begin{proof}
Let $\r_1,\r_2,\cdots,\r_m$ be a basis of $\F_q$ over $\F_p$. Then there exist a unique expression for $\a_i=\sum_{j=1}^m a_{ij}\r_j$ and $\a_i^2=\sum_{j=1}^m b_{ij}\r_j$ for some $a_{ij},b_{ij}\in \F_p$, respectively.
Let $G$ be the matrix defined by
$$G=\left(\begin{array}{cccc}a_{11} & a_{21} & \cdots & a_{q-1,1} \\ \vdots & \vdots & \cdots & \vdots \\ a_{1m} & a_{2m} &\cdots & a_{q-1,m}\\
b_{11} & b_{21} & \cdots & b_{q-1,1} \\ \vdots & \vdots & \cdots & \vdots \\ b_{1m} & b_{2m} &\cdots & b_{q-1,m}
\end{array}\right).$$
The code $C$ is generated by the rows of above matrix $G$.
Let $\{\b_1,\b_2,\cdots,\b_m\} $ be the dual basis of $\{\r_1,\r_2,\cdots,\r_m\}$. Then we have
$$(\text{Tr}(\b_i\a_1),\text{Tr}(\b_i\a_2),\cdots,\text{Tr}(\b_i \a_{q-1}))=(a_{1i},a_{2i},\cdots,a_{q-1,i})$$ and
$$(\text{Tr}(\b_i\a_1^2),\text{Tr}(\b_i\a_2^2),\cdots,\text{Tr}(\b_i \a_{q-1}^2))=(b_{1i},b_{2i},\cdots,b_{q-1,i}).$$
Hence, we have
\[C=\{(\text{Tr}(ax^2+bx))_{x\in \F_q^*}|\; a,b\in \F_q\}.\]
\end{proof}

In order to determine the minimum distance of $C$, it is sufficient to determine the size of the set $\{x\in \F_q^*: \text{Tr}(ax^2+bx)=0\}$.
Let $p$ be an odd prime, $q=p^ m$ and let $\zeta_1,\zeta_2,\cdots,\zeta_m$ be a basis of $\F_q$ over $\F_p$. Then there exists a unique linear combination $x=\sum_{i=1}^m x_i\zeta_i$ with $x_i\in \F_p$.
For any nonzero element $a\in \F_q^*$,
\begin{eqnarray*}
\text{Tr}(ax^2)=\text{Tr}(a( \sum_{i=1}^m x_i\zeta_i)^2)=\sum_{i,j=1}^m \text{Tr}(a\zeta_i\zeta_j)x_ix_j
\end{eqnarray*}
is a quadratic form in $n$ indeterminates $x_1,x_2,\cdots,x_m$ over $\F_p$. The function $\text{Tr}(ax^2)$ is non-degenerate quadratic form, since $a(x+z)^2-ax^2=2axz+z^2$ is permutation of $\F_q$ for any nonzero $z\in \F_q^*$.

The following two lemmas are very useful to determine the number of solutions of equations in quadratic forms from \cite[Theorem 6.26 and 6.27]{LN83}.
Let $v$ be an integer-valued function defined by $v(c)=-1$ for $c\in \F_q^*$ and $v(0)=q-1$.
\begin{lemma}\label{lem: 5.2}
Let $f$ be a nondegenerate quadratic form over $\F_q$ for odd prime power $q$ in an even number $n$ of indeterminates. Then for $c\in \F_q$ the number of the equation
$f(x_1,x_2,\cdots,x_n)=c$ in $\F_q^n$ is $$q^{n-1}+v(c)q^{(n-2)/2} \eta((-1)^{n/2}\Delta),$$
where $\eta$ is the quadratic character of $\F_q$ and $\Delta$ is the determinant of $f$.
\end{lemma}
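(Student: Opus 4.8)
The statement is the classical evaluation of quadratic-form solution counts (essentially \cite[Theorem 6.26]{LN83}), and I would reconstruct it by exponential sums. The plan is to pass to a diagonal form, write the solution count as an additive character sum, factor it into one-variable quadratic Gauss sums, and invoke the evaluation $g^2=\eta(-1)q$ of the quadratic Gauss sum.

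First I would reduce to the diagonal case. Since $\mathrm{char}(\F_q)$ is odd, any nondegenerate quadratic form is equivalent, under an invertible linear change of variables, to $f(x_1,\dots,x_n)=a_1x_1^2+\cdots+a_nx_n^2$ with all $a_i\in\F_q^*$. A linear substitution is a bijection of $\F_q^n$ and so preserves the number of solutions of $f=c$; moreover it changes the determinant $\Delta$ only by a nonzero square factor, so $\eta(\Delta)$ is unchanged. Thus it suffices to prove the formula for the diagonal form, for which $\Delta=a_1\cdots a_n$.

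Next, fixing a nontrivial additive character $\chi$ of $\F_q$ and using orthogonality in the form $\frac1q\sum_{t\in\F_q}\chi(ts)=1$ if $s=0$ and $0$ otherwise, I would write
$$N(c)=\frac1q\sum_{t\in\F_q}\sum_{x\in\F_q^n}\chi\!\big(t(f(x)-c)\big).$$
The term $t=0$ contributes $q^{n-1}$. For each $t\neq0$ the inner sum factors as $\chi(-tc)\prod_{i=1}^n\sum_{x_i\in\F_q}\chi(ta_ix_i^2)$, so the whole problem reduces to the one-variable sum $\sum_{x\in\F_q}\chi(ax^2)$.

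The key step, and the only nonroutine input, is the evaluation of this sum. Counting square roots gives $\sum_{x\in\F_q}\chi(ax^2)=\eta(a)g$, where $g=\sum_{y\in\F_q}\eta(y)\chi(y)$ is the quadratic Gauss sum; the crucial identity is $g^2=\eta(-1)q$ (obtained from $g\overline g=q$ together with $\overline g=\eta(-1)g$). With this in hand the product over $i$ becomes $\eta(t^na_1\cdots a_n)g^n$, and since $n$ is even, $\eta(t)^n=1$, so this equals $\eta(\Delta)g^n=\eta(\Delta)\big(\eta(-1)q\big)^{n/2}=\eta\big((-1)^{n/2}\Delta\big)q^{n/2}$, independent of $t$. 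Finally $\sum_{t\in\F_q^*}\chi(-tc)$ equals $q-1$ if $c=0$ and $-1$ otherwise, i.e.\ exactly $v(c)$. Collecting the terms,
$$N(c)=q^{n-1}+\frac1q\,v(c)\,\eta\big((-1)^{n/2}\Delta\big)q^{n/2}=q^{n-1}+v(c)\,q^{(n-2)/2}\,\eta\big((-1)^{n/2}\Delta\big),$$
as claimed. The main obstacle is genuinely the Gauss sum evaluation $g^2=\eta(-1)q$; everything else is bookkeeping with character orthogonality and the parity of $n$.
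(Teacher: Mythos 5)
Your proof is correct: the diagonalization step is valid in odd characteristic and changes $\Delta$ only by a nonzero square, the orthogonality decomposition and the factorization over the variables are right, the identities $\sum_{x\in\F_q}\chi(ax^2)=\eta(a)g$ and $g^2=\eta(-1)q$ are the standard quadratic Gauss sum facts, and the evenness of $n$ enters exactly where it must, namely to make $\eta(t)^n=1$ so that the sum over $t\in\F_q^*$ collapses to $v(c)$. Be aware, however, that there is no proof in the paper to compare yours against: Lemma \ref{lem: 5.2} is imported without proof from \cite[Theorem 6.26]{LN83}, and the paper only invokes it (together with its odd-$n$ companion, Lemma \ref{lem: 5.3}) inside the proof of Proposition \ref{prop: 5.4}. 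So your write-up is a reconstruction of the cited textbook result rather than an alternative to an argument of the authors; it is essentially the classical character-sum proof of that theorem, and what it buys is self-containedness, replacing the black-box citation by a short complete argument whose only nontrivial input is the Gauss-sum evaluation $g^2=\eta(-1)q$, exactly as you observe.
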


\begin{lemma}\label{lem: 5.3}
Let $f$ be a nondegenerate quadratic form over $\F_q$ for odd prime power $q$ in an odd number $n$ of indeterminates. Then for $c\in \F_q$ the number of the equation
$f(x_1,x_2,\cdots,x_n)=c$ in $\F_q^n$ is $$q^{n-1}+q^{(n-1)/2} \eta((-1)^{(n-1)/2}c\Delta),$$
where $\eta$ is the quadratic character of $\F_q$ and $\Delta$ is the determinant of $f$.
\end{lemma}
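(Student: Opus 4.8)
The plan is to prove Lemma \ref{lem: 5.3} by reducing to a diagonal form and then evaluating the solution count with additive characters and quadratic Gauss sums. First I would exploit invariance under nonsingular linear substitutions $x=My$: such a substitution sends $f$ to an equivalent form with matrix $M^{\mathsf T}AM$, so $\Delta$ is multiplied by $(\det M)^2$, and since $\eta$ has order $2$ the value $\eta(\Delta)$ is unchanged, while the number of solutions of $f=c$ is obviously preserved by a bijective change of variables. Over $\F_q$ with $q$ odd every nondegenerate quadratic form is congruent to a diagonal one, so I may assume $f(x_1,\dots,x_n)=a_1x_1^2+\cdots+a_nx_n^2$ with all $a_i\in\F_q^*$ and $\Delta=a_1\cdots a_n$ up to a square. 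Writing $N(c)$ for the number of solutions and using orthogonality of additive characters, I record
$$N(c)=\frac1q\sum_{t\in\F_q}\sum_{x\in\F_q^n}\psi_1\big(t(f(x)-c)\big),$$
where $\psi_1$ is the canonical additive character of $\F_q$; the term $t=0$ contributes $q^{n-1}$.

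For $t\neq0$ the inner sum factors over the coordinates, and the argument rests on the one-variable Gauss-sum identity $\sum_{x\in\F_q}\psi_1(ax^2)=\eta(a)\,G$ for $a\in\F_q^*$, where $G=\sum_{x}\eta(x)\psi_1(x)$, together with $G^2=\eta(-1)\,q$. These give
$$\sum_{x}\psi_1\big(tf(x)\big)=\prod_{i=1}^n\eta(ta_i)\,G=\eta(t)^n\,\eta(\Delta)\,G^n.$$
Because $n$ is odd we have $\eta(t)^n=\eta(t)$ and $G^n=G\cdot\big(\eta(-1)q\big)^{(n-1)/2}$, so exactly one factor $G$ survives, and it remains to sum the residual character sum against $\psi_1(-tc)$ over $t\in\F_q^*$.

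The decisive step is the sum $\sum_{t\in\F_q^*}\eta(t)\,\psi_1(-tc)$: for $c=0$ it vanishes, giving $N(0)=q^{n-1}$ consistent with $\eta(0)=0$, while for $c\neq0$ the substitution $t\mapsto-t/c$ turns it into $\eta(-1)\eta(c)\,G$, producing a second factor $G$. The two Gauss sums then combine as $G^2=\eta(-1)q$, which rationalizes the whole expression; collecting the powers of $q$ and of $\eta(-1)$ and using $\eta(-1)^{(n-1)/2}=\eta\big((-1)^{(n-1)/2}\big)$ yields $q^{n-1}+q^{(n-1)/2}\,\eta\big((-1)^{(n-1)/2}c\,\Delta\big)$, as claimed. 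I expect the main obstacle to lie not in the overall structure but in the careful sign bookkeeping: establishing $\sum_x\psi_1(ax^2)=\eta(a)G$ and the value $G^2=\eta(-1)q$, and then tracking the accumulated powers of $\eta(-1)$ and $\eta(t)$ so that the single leftover Gauss factor from the odd exponent pairs correctly with the $c$-dependent sum and the final quadratic character lands precisely on $(-1)^{(n-1)/2}c\Delta$ rather than on a sign-twisted variant.
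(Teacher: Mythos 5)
Your proof is correct: the reduction to a diagonal form (with $\eta(\Delta)$ invariant under congruence), the identities $\sum_{x\in\F_q}\psi_1(ax^2)=\eta(a)G$ and $G^2=\eta(-1)q$, and the final bookkeeping --- where the $\eta(-1)$ produced by the substitution $t\mapsto -t/c$ cancels against the $\eta(-1)$ from $G^2$, and the $c=0$ case degenerates consistently with $\eta(0)=0$ --- all check out and yield exactly $q^{n-1}+q^{(n-1)/2}\eta\big((-1)^{(n-1)/2}c\Delta\big)$. Note, however, that the paper gives no proof of this lemma: it is quoted directly from \cite[Theorems 6.26 and 6.27]{LN83}, so there is no internal argument to compare against; your Gauss-sum computation is essentially the classical proof found in that reference (diagonalization followed by evaluation of quadratic character sums), just carried out in one pass with additive-character orthogonality rather than through the intermediate lemmas on diagonal equations that Lidl and Niederreiter develop.
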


\begin{proposition}\label{prop: 5.4}
Let $p$ be an odd prime, $q=p^ m$and $C$ be the code defined as above.
If $m$ is even, then the minimum distance of $C$ is $$d(p,m,2)=(p-1)p^{m-1}-(p-1)p^{\frac{m-2}{2}}.$$
If $m$ is odd, then the minimum distance of $C$ is $$d(p,m,2)=(p-1)p^{m-1}-p^{\frac{m-1}{2}}.$$
In particular, the code $C$ is a $q$-ary $[q-1,2m,d(p,m,2)]$ linear code.
\end{proposition}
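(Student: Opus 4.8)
The plan is to run everything through the description of $C$ furnished by Lemma \ref{lem: 5.1}, under which each codeword corresponds to a pair $(a,b)\in\F_q\times\F_q$ and has Hamming weight
$$\mathrm{wt}(a,b)=\#\{x\in\F_q^*:\Tr(ax^2+bx)\neq 0\}=q-N(a,b),\qquad N(a,b):=\#\{x\in\F_q:\Tr(ax^2+bx)=0\},$$
the last equality holding because $x=0$ always contributes a zero. First I would verify that the map $(a,b)\mapsto(\Tr(ax^2+bx))_{x\in\F_q^*}$ is injective: if the codeword vanishes identically then it vanishes on all of $\F_q$, and replacing $x$ by $-x$ (here $p$ is odd) and adding/subtracting gives $\Tr(ax^2)\equiv 0$ and $\Tr(bx)\equiv 0$ on $\F_q$; nondegeneracy of the trace form forces $a=b=0$. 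This yields $\dim_{\F_p}C=2m$ for the length $q-1$ code, and reduces computing $d(p,m,2)=\min_{(a,b)\neq 0}\mathrm{wt}(a,b)$ to determining $\max_{(a,b)\neq 0}N(a,b)$.

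Next I would split on whether $a=0$. For $a=0,\ b\neq 0$ the map $x\mapsto\Tr(bx)$ is a nonzero $\F_p$-linear form, so $N(0,b)=p^{m-1}$. For $a\neq 0$ I would complete the square: with $u=x+b/(2a)$ one has $\Tr(ax^2+bx)=\Tr(au^2)-c$, where $c:=\Tr(b^2/(4a))\in\F_p$, so $N(a,b)$ equals the number of solutions of the nondegenerate $\F_p$-quadratic form $Q_a(u):=\Tr(au^2)$ in $m$ indeterminates taking the value $c$. Now Lemma \ref{lem: 5.2} (for even $m$) and Lemma \ref{lem: 5.3} (for odd $m$) apply directly, \emph{over the field $\F_p$ with $n=m$ indeterminates} (so $q$ is replaced by $p$ in their statements), expressing $N(a,b)$ through the quadratic character $\eta$ of $\F_p$, the value $c$, and the discriminant $\Delta_a$ of $Q_a$.

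The key input is the behaviour of $\Delta_a$ as $a$ varies. Writing $G_a=(\Tr(a\zeta_i\zeta_j))_{i,j}$ for the Gram matrix of $Q_a$ in a fixed $\F_p$-basis $\zeta_1,\dots,\zeta_m$, I would observe $G_a=M_a^{\top}G_1$, where $M_a$ is the matrix of multiplication by $a$; hence $\det G_a=\det(M_a)\det G_1=\mathrm{N}_{\F_q/\F_p}(a)\,\det G_1$, so that $\eta(\Delta_a)=\eta(\mathrm{N}_{\F_q/\F_p}(a))\,\eta(\Delta_1)$. Since $\mathrm{N}_{\F_q/\F_p}:\F_q^*\to\F_p^*$ is surjective, $\eta(\Delta_a)$ takes both values $\pm 1$ as $a$ runs over $\F_q^*$. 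Reading off the maximum over the three regimes: for even $m$ the optimum is the homogeneous case $c=0$ (e.g. $b=0$), where $v(c)=p-1$, with $a$ chosen so that $\eta((-1)^{m/2}\Delta_a)=+1$, giving $\max N=p^{m-1}+(p-1)p^{(m-2)/2}$ and hence $d(p,m,2)=(p-1)p^{m-1}-(p-1)p^{(m-2)/2}$; for odd $m$ the homogeneous value is merely $p^{m-1}$, so the optimum comes from $c\neq 0$ with $\eta((-1)^{(m-1)/2}c\Delta_a)=+1$, giving $\max N=p^{m-1}+p^{(m-1)/2}$ and $d(p,m,2)=(p-1)p^{m-1}-p^{(m-1)/2}$.

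I expect the \textbf{main obstacle} to be the achievability step in the odd case, namely exhibiting an admissible $(a,b)$ that realizes the favourable sign with $c\neq 0$. Here I would use that for fixed $a$ one has $c=\tfrac14\Tr(a^{-1}b^2)=\tfrac14 Q_{a^{-1}}(b)$, which for $m\geq 2$ is a nondegenerate form in at least two variables and so represents every element of $\F_p$, in particular both nonzero square classes; combined with the freedom in $a$ through $\eta(\mathrm{N}_{\F_q/\F_p}(a))$, this lets one match the class required to make the character value $+1$. The case $m=1$ must be checked separately: there $\F_q=\F_p$, $\Delta_a=a$ and $c=b^2/(4a)$, whose product forces the sign $+1$, again yielding $\max N=2$ consistent with the formula. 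Comparing the three regimes confirms these maxima are global, so the stated minimum distances hold and $C$ has the claimed parameters $[q-1,2m,d(p,m,2)]$.
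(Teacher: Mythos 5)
Your proposal is correct, and its skeleton is the same as the paper's: reduce via Lemma \ref{lem: 5.1} to counting zeros of $\Tr(ax^2+bx)$, complete the square, and apply Lemmas \ref{lem: 5.2} and \ref{lem: 5.3} over $\F_p$ in $m$ indeterminates (you are right, and more careful than the paper, that $q$ must be replaced by $p$ there). Where you genuinely diverge is in the achievability step, i.e.\ showing the favourable sign of the character actually occurs. The paper does this by two double-counting arguments: for even $m$ it computes $\sum_{a\in\F_q^*}\#\{x:\Tr(ax^2)=0\}=(q-1)p^{m-1}$ to conclude that exactly half of the $a\in\F_q^*$ satisfy $\eta((-1)^{m/2}\Delta_a)=+1$; for odd $m$ it sums $\#\{x:\Tr(x^2+bx)=0\}$ over $b\in\F_q$, asserting along the way (incorrectly) that $x^2+bx$ permutes $\F_q$ — the stated count in the paper's equation \eqref{eq:8} is in fact off by $q-p^{m-1}$, although the conclusion survives since the correct sum is even larger. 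You instead prove the structural identity $\det G_a=\mathrm{N}_{\F_q/\F_p}(a)\det G_1$ for the Gram matrices, so that $\eta(\Delta_a)$ takes both values by surjectivity of the norm, and in the odd case you hit any prescribed nonzero $c=\tfrac14 Q_{a^{-1}}(b)$ using the universality of nondegenerate quadratic forms in $m\ge 2$ variables over $\F_p$, with $m=1$ checked by hand. Your route buys precision and robustness: it identifies exactly which discriminant classes arise rather than inferring existence by averaging, it sidesteps the paper's faulty permutation claim, and it additionally proves the dimension statement $\dim_{\F_p}C=2m$ (injectivity of $(a,b)\mapsto(\Tr(ax^2+bx))_{x\in\F_q^*}$), which the paper asserts but never verifies; the paper's counting, in exchange, needs no Gram-matrix or norm computation. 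Both arguments are elementary and both close all three regimes ($a=0$; $a\neq0,c=0$; $a\neq0,c\neq0$), so the claimed values of $d(p,m,2)$ follow either way.
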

\begin{proof}
As we have shown that $\text{Tr}(ax^2)$ is a nondegenerate quadratic form, the equation
\begin{eqnarray*}
\text{Tr}(ax^2+bx)=\text{Tr}\left(a( \sum_{i=1}^m x_i\zeta_i)^2+b\sum_{i=1}^m x_i\zeta_i\right)=\sum_{i,j=1}^m \text{Tr}(a\zeta_i\zeta_j)x_ix_j+\sum_{i=1}^m \text{Tr}(b\zeta_i)x_i=0
\end{eqnarray*}
is equivalent to the equation $a_1y_1^2+a_2y_2^2+\cdots+a_my_m^2+b_1y_1+b_2y_2+\cdots+b_my_m=0$ for $a_i\in \F_p^*$ and $b_i\in \F_p$ after a nonsingular linear transformation.
There is a one-to-one correspondence between $b\in \F_q$ and $(b_1,b_2,\cdots,b_m)\in \F_p^m$.
Let $z_i=y_i+b_i/2a_i$ for $1\le i\le m$ and $c_{a,b}=\sum_{i=1}^m b_i^2/4a_i$. Then the equation $\text{Tr}(ax^2+bx)=0$ is equivalent to the following form
$$a_1z_1^2+a_2z_2^2+\cdots+a_mz_m^2=c_{a,b}.$$

Let us first consider the case when $m$ is even.
From Lemma \ref{lem: 5.2}, the number of solutions of the equation with the non-degenerate quadratic form $\text{Tr}(ax^2+bx)=0$ is
$$p^{m-1}+v(c_{a,b}) p^{\frac{m-2}{2}} \eta((-1)^{\frac{m}{2}}\Delta_a), $$
where $\eta(\cdot)=(\frac{\cdot}{p})$ is the quadratic character or Legendre symbol of $\F_p$ and $\Delta_a=\prod_{i=1}^m a_i$ is the determinant of the quadratic form.
Hence, the possible smallest Hamming weight of nonzero codewords of $C$ is $q-p^{m-1}-(p-1)p^{\frac{m-2}{2}}.$
It remains to prove that there exists $a\in \F_q^*$ such that $\eta((-1)^{\frac{m}{2}}\Delta_a)=1$ for the quadratic form Tr$(ax^2)$.
Using the technique of double counting, we have
\begin{eqnarray*}
\sum_{a\in \F_q^*} |\{x\in \F_q: \text{Tr}(ax^2)=0\}|&=&\sum_{x\in \F_q} |\{a\in \F_q^*: \text{Tr}(ax^2)=0\}|\\&=&(q-1)(p^{m-1}-1)+(q-1)\\&=&p^{m-1}(q-1).
\end{eqnarray*}
Thus, there are exactly one-half elements $a\in \F_q^*$ such that $ \eta((-1)^{\frac{m}{2}}\Delta_a)=1$ and one-half elements $a\in \F_q^*$ such that $ \eta((-1)^{\frac{m}{2}}\Delta_a)=-1$. It follows that there exists an codeword in $C$ with Hamming weight $q-p^{m-1}-(p-1) p^{\frac{m-2}{2}}$.
Hence, the minimum distance of $C$ is $q-p^{m-1}-(p-1) p^{\frac{m-2}{2}}$.

If $m$ is odd, then the number of solutions of $\text{Tr}(ax^2+bx)=0$ is
$$N=p^{m-1}+p^{\frac{m-1}{2}} \eta((-1)^{\frac{m-1}{2}}c_{a,b}\Delta_a)$$
 from Lemma \ref{lem: 5.3}. It is easy to check that $x^2+bx$ is a permutation of $\F_q$, we have
\begin{equation}\label{eq:8}
\sum_{b\in \F_q} |\{x\in \F_q: \text{Tr}(x^2+bx)=0\}|=p^{m-1}q.
\end{equation}
Moreover, there exists an element $b\in \F_q^*$ such that $c_{1,b}\neq 0$.
It follows that there are at least one element $b\in \F_q^*$ such that $ \eta((-1)^{\frac{m}{2}}c_{1,b}\Delta_1)=1$ from Equation \eqref{eq:8} and the Hamming weight of $\text{Tr}(x^2+bx)_{x\in \F_q^*}$ is $q-p^{m-1}- p^{\frac{m-1}{2}}$.
Hence, the minimum distance of $C$ is $q-p^{m-1}- p^{\frac{m-1}{2}}$.
\end{proof}

From Theorem \ref{thm: 4.3} and Proposition \ref{prop: 5.4}, we have the following tight bound for the number of rational places of Artin-Schreier curves defined by $y^p-y=f(x)$ with $\deg f(x)=2$.

\begin{theorem}\label{thm: 5.5}
The number of rational places of the function field $E_f=\F_q(x,y)$ of Artin-Scherier curve defined by $y^p-y=f(x)$ with $\deg f(x)=2$ is upper bounded by
$$N(E_f)\le 1+p(q-d(p,m,2))=\begin{cases} q+1+p^{\frac{m+1}{2}} & \text{ if } m \text{ is odd,}\\ q+1+(p-1)p^{\frac{m}{2}} & \text{ if } m \text{ is even.}\end{cases}$$
Moreover, the above upper bound can be achieved by the number of rational places of Artin-Scherier curves.
\end{theorem}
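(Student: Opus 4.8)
The statement is the specialization of the general machinery to $r=2$: I would simply combine the upper bound of Theorem \ref{thm: 4.3} with the explicit minimum distance determined in Proposition \ref{prop: 5.4}. First, applying Theorem \ref{thm: 4.3} with $r=2$ gives, for every $f$ with $\deg f(x)=2$, the bound $N(E_f)\le 1+p\big(q-d(p,m,2)\big)$, together with the assertion that equality is attained by some polynomial of degree at most $2$. It then remains only to evaluate $1+p(q-d(p,m,2))$ using the two formulas of Proposition \ref{prop: 5.4}.

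For the algebraic simplification I would treat the two parities separately, relying on the identity $p^m-(p-1)p^{m-1}=p^{m-1}$. When $m$ is even, $d(p,m,2)=(p-1)p^{m-1}-(p-1)p^{(m-2)/2}$, hence $q-d(p,m,2)=p^{m-1}+(p-1)p^{(m-2)/2}$, and multiplying by $p$ and adding $1$ gives $q+1+(p-1)p^{m/2}$. When $m$ is odd, $d(p,m,2)=(p-1)p^{m-1}-p^{(m-1)/2}$, so $q-d(p,m,2)=p^{m-1}+p^{(m-1)/2}$ and $1+p(q-d(p,m,2))=q+1+p^{(m+1)/2}$. These are precisely the two branches in the theorem, which settles the upper bound.

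For the sharpness claim I would verify that the extremal polynomial really has degree $2$, so that it lies in the class $\deg f(x)=2$ rather than degenerating to a linear one. By Lemma \ref{lem: 5.1} every codeword of $C$ has the shape $(\text{Tr}(ax^2+bx))_{x\in\F_q^*}$; a codeword with $a=0$ and $b\neq0$ vanishes at exactly $p^{m-1}-1$ nonzero points and thus has weight $(p-1)p^{m-1}$, which strictly exceeds $d(p,m,2)$ for both parities. Consequently any minimum weight codeword has $a\neq0$, and the polynomial $h(x)$ furnished by Theorem \ref{thm: 4.3} is a genuine quadratic; the corresponding Artin-Schreier curve $y^p-y=h(x)$ with $\deg h=2$ then meets the bound. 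I expect no serious obstacle here: once Proposition \ref{prop: 5.4} is in hand the estimate is pure substitution, and the only subtle point is exactly this weight comparison guaranteeing that achievability holds within the stated degree-two family.
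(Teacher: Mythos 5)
Your proposal is correct and follows exactly the paper's own route: the paper obtains Theorem \ref{thm: 5.5} precisely by combining the bound and achievability statement of Theorem \ref{thm: 4.3} with the values of $d(p,m,2)$ from Proposition \ref{prop: 5.4}, and your arithmetic in both parity cases matches the stated formulas. Your extra verification that the extremal polynomial is a genuine quadratic --- since any codeword $(\Tr(bx))_{x\in\F_q^*}$ with $a=0$, $b\neq 0$ has weight $(p-1)p^{m-1}>d(p,m,2)$ --- is a point the paper leaves implicit, and it is a sound and worthwhile refinement rather than a deviation.
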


\begin{remark}
From Lemma \ref{lem: 3.1}, the genus of the Artin-Scherier curve $E_f$ is $$g(E_f)=\frac{(\deg f-1)(p-1)}{2}=\frac{p-1}{2}$$ and the Hasse-Weil upper bound is given by $$q+1+2g(E_f)\sqrt{q}=q+1+(p-1)\sqrt{q}.$$
If $m$ is even, the bound given in Theorem \ref{thm: 5.5} is the same as the Hasse-Weil  bound. For even $m$, there are maximal curves given in the form of Artin-Schreier curves with genus $(p-1)/2$.
If $m$ is odd, the bound given in Theorem \ref{thm: 5.5} is better than the Serre bound. Roughly speaking, a factor $\sqrt{p}$ can be removed from the Hasse-Weil bound $N-q-1\le (p-1)\sqrt{q}$.
\end{remark}

\subsection{Examples for $\deg f(x)\ge 3$}
Unfortunately we can't determine the exact value of minimum distance $d(p,m,r)$ for $r\ge 3$.
However, we provide some examples of the minimum distance $d(p,m,r)$ with the help of the software Magma in this subsection.

\begin{example}
If $\deg f(x)=3$, then the genus of the Artin-Scherier curve $E_f$ is $$g(E_f)=\frac{(\deg f-1)(p-1)}{2}=p-1$$
and the Hasse-Weil upper bound is given by $$q+1+2g(E_f)\sqrt{q}=q+1+2(p-1)\sqrt{q}.$$
\begin{itemize}
\item[(1)] If $q=7^2$, then the code $C$ is a $ [48,6,34] $ linear code. From Theorem \ref{thm: 4.3}, the number of rational places is upper bounded by $$N(E_f)\le 1+7\times (49-34)=106,$$
while the Hasse-Weil bound is $q+1+2g(E_f)\cdot \sqrt{q}=49+1+2\times 6\times  \sqrt{49}=134$. For more examples, please refer to Table \ref{table: 5.3}.
From the table, we can see that our bound is indeed better than the Hasse-Weil bound. Furthermore, if our bound achieves the Hasse-Weil bound, then there exist maximal function fields given by Artin-Schreier curves.
\item[(2)] If $q=5^3$, then the code $C$ is a $[124,9,90]$ linear code. From Theorem \ref{thm: 4.3},  the number of rational places is upper bounded by $$N(E_f)\le 1+5\times (125-90)=176,$$
while the Serre bound is $q+1+g(E_f)\cdot [2\sqrt{q}]=125+1+4\times [2\times \sqrt{125}]=214$. For more examples, please refer to Table \ref{table: 5.4}.
\end{itemize}
\end{example}

\begin{table}[h]
\caption{$q=p^2, \deg f(x)=3$}
\label{table: 5.3}\vskip4pt
\begin{tabular}{||c|c|c|c|c|c|c||}
\hline
\hline      & $q=p^2$ & Parameters [n,k,d] & Hasse-Weil bound & Our bound              \\
\hline 1  & $5^2$ & [24,6,12]     & 66 & 66 \\
\hline 2  & $7^2$ & [48,6,34]      & 134 & 106 \\
\hline 3  &     $11^2$         & [120,6,90]    & 342 & 342 \\
\hline 4  &     $13^2$         & [168,6,142] & 482 &  352\\
\hline 5 &        $17^2$        &   [288,6,240]                  & 834 &  834\\
\hline 6  & $19^2$ & [360,6,322]      & 1046  &  742 \\
\hline 7  &     $23^2$         & [528,6,462]    & 1542  & 1542\\
\hline 8  &     $29^2$         & [840,6,756] & 2466 &  2466\\
\hline 9 &        $31^2$        &   [960,6,898]                  & 2822 & 1954  \\
\hline 10 &        $37^2$        &   [1368,6,1294]                  & 4034 &  2776\\
\hline 11  & $41^2$ & [1680,6,1560]     & 4962 & 4962 \\
\hline 12  & $43^2$ & [1848,6,1762]      & 5462& 3742 \\
\hline 13  &     $47^2$         & [2208,6,2070]    & 6534 & 6534 \\
\hline 14  &     $53^2$         & [2808,6,2652] & 8322 & 8322\\
\hline 15 &        $59^2$        &  [3480,6,3306]                  & 10326  & 10326 \\
\hline 16  & $61^2$ & [3720,6,3598]      & 11042  &  7504 \\
\hline 17  &     $67^2$         & [4488,6,4354]    &  13334 & 9046 \\
\hline 18  &     $71^2$         & [5040,6,4830] & 14982 & 14982\\
\hline 19 &        $73^2$        &   [5328,6,5182]                  & 14842 & 10732 \\
\hline 20 &        $79^2$        &   [6240,6,6082]                  &  18566 & 12562 \\
\hline 21&        $83^2$        &   [6888,6,6642]                  & 20502 &  20502\\
\hline \hline
\end{tabular}
\end{table}

\begin{table}[h]
\caption{$q=p^3, \deg f(x)=3$}
\label{table: 5.4}\vskip4pt
\begin{tabular}{||c|c|c|c|c|c|c||}
\hline
\hline      & $q=p^5$ & Parameters [n,k,d] & Serre bound & Our bound              \\
\hline 1  & $5^3$ & [124,9,90]      & 214 & 176 \\
\hline 2  & $7^3$ & [342,9,270]      &  566 & 512 \\
\hline 3  & $11^3$ & [1330,9,1166]      & 2052  & 1816 \\
\hline 4  & $13^3$ & [2196,9,1944]      &  3314 & 3290 \\
\hline \hline
\end{tabular}
\end{table}

\begin{example}
If $\deg f(x)=4$, then the genus of $E_f$ is $$g(E_f)=\frac{(\deg f-1)(p-1)}{2}=\frac{3(p-1)}{2}, $$
and the Hasse-Weil upper bound is given by $$q+1+2g(E_f)\sqrt{q}=q+1+3(p-1)\sqrt{q}.$$

\begin{itemize}
\item[(1)] If $q=5^2$, then the code $C$ is a $ [24,8,12] $ linear code. From Theorem \ref{thm: 4.3},  the number of rational places is upper bounded by $$N(E_f)\le 1+5\times (25-12)=66,$$
while the Hasse-Weil  bound is $q+1+2g(E_f) \sqrt{q}=25+1+2\times 6\times  \sqrt{25}=86$. For more examples, please refer to Table \ref{table: 5.5}.
\item[(2)] If $q=5^3$, then the code $C$ is a $[124,12,83]$ linear code. From Theorem \ref{thm: 4.3}, the number of rational places is upper bounded by $$N(E_f)\le 1+5\times (125-83)=211,$$
while the Serre bound is $q+1+g(E_f)[2\sqrt{q}]=125+1+6\times [2\times \sqrt{125}]=258$.
\item[(3)] If $q=7^3$, then the code $C$ is a $[342,12,265]$ linear code. From Theorem \ref{thm: 4.3},  the number of rational places is upper bounded by $$N(E_f)\le 1+7\times (343-265)=547,$$
while the Serre bound is $q+1+g(E_f)[2\sqrt{q}]=343+1+9\times [2\times \sqrt{343}]=677$.
\end{itemize}
\end{example}

\begin{table}[h]
\caption{$q=p^2, \deg f(x)=4$}
\label{table: 5.5}\vskip4pt
\begin{tabular}{||c|c|c|c|c|c|c||}
\hline
\hline   $\deg f(x)=4$   & $q=p^2$ & Parameters [n,k,d] & Hasse-Weil bound & Our bound              \\
\hline 1  & $5^2$ & [24,8,12]     & 86 & 66 \\
\hline 2  & $7^2$ & [48,8,24]      & 176 & 176 \\
\hline 3  &     $11^2$         & [120,8,80]    & 452 & 452 \\
\hline 4  &     $13^2$         & [168,8,132] & 638 &  482\\
\hline 5 &        $17^2$        &   [288,8,240]                  & 1106 &  834\\
\hline 6  & $19^2$ & [360,8,288]       & 1388  &  1388 \\
\hline 7  &     $23^2$         & [528,8,440]    & 2048  & 2048\\
\hline \hline
\end{tabular}
\end{table}

\begin{example}
If $\deg f(x)=5$, then the genus of $E_f$ is $$g(E_f)=\frac{(\deg f-1)(p-1)}{2}=2(p-1), $$
and the Hasse-Weil upper bound is given by $$q+1+2g(E_f)\sqrt{q}=q+1+4(p-1)\sqrt{q}.$$
 If $q=7^2$, then the code $C$ is a $ [48,10,24] $ linear code. From Theorem \ref{thm: 4.3},  the number of rational places is upper bounded by $$N(E_f)\le 1+7\times (49-24)=176,$$
while the Hasse-Weil  bound is $q+1+2g(E_f) \sqrt{q}=49+1+2\times 12\times  \sqrt{49}=218$. For more examples, please refer to Table \ref{table: 5.6}.
\end{example}

\begin{table}[h]
\caption{$q=p^2, \deg f(x)=5$}
\label{table: 5.6}\vskip4pt
\begin{tabular}{||c|c|c|c|c|c|c||}
\hline
\hline   $\deg f(x)=5$   & $q=p^2$ & Parameters [n,k,d] & Hasse-Weil bound & Our bound              \\
\hline 1  & $7^2$ & [48,10,24]     & 218 & 176 \\
\hline 2  & $11^2$ &  [120,10,80]       & 562 &452 \\
\hline 3  &     $13^2$         & [168,10,132]    & 794 & 482 \\
\hline \hline
\end{tabular}
\end{table}

From the above examples, we can see that our bound \eqref{eq:3.3} given in Theorem \ref{thm: 4.3}  is better than the Hasse-Weil bound and Serre bound if the minimum distance $d(p,m,r)$ are explicitly determined.

\section{Conclusion}
In order to obtain a tight upper bound for the number of rational places of Artin-Schreier curves, it remains to determine the exact value or provide a good lower bound for the minimum distance of the code $C$ generated by the matrix
$$A=\left(\begin{array}{cccc}\a_1& \a_2& \cdots & \a_{q-1} \\ \a_1^2 & \a_2^2 & \cdots & \a_{q-1}^2 \\ \vdots & \vdots & \cdots & \vdots \\ \a_1^r & \a_2^r & \cdots & \a_{q-1}^r\end{array}\right)\in \F_p^{rm\times (q-1)},$$
where $\a_j^i\in \F_q^*$ are viewed as column vectors of dimension $m$ under a fixed basis of $\F_q$ over $\F_p$ for $1\le i\le r$ and $1\le j\le q-1$.

In this paper we determine the exact value of the minimum distance only for $r=2$. For $r\ge 3$, we provide some numerical examples. 
In general, we leave it as an open research problem.

\end{document}